%
\documentclass{amsart}
\usepackage{amssymb,amsmath,amsthm,hyperref}
\oddsidemargin = 0.0cm \evensidemargin = 0.0cm \textwidth = 6.0in
\textheight =8.0in

\newtheorem{theorem}{Theorem}[section]
\newtheorem{lemma}[theorem]{Lemma}
\newtheorem{corollary}[theorem]{Corollary}

\newtheorem{proposition}[theorem]{Proposition}

\theoremstyle{definition}

\theoremstyle{remark}
\newtheorem{remark}[theorem]{Remark}

\numberwithin{equation}{section}


\newcommand{\spt}{\mbox{\rm spt}}

\newcommand{\stroke}{\,\mid\,}
\newcommand{\Eis}[2]{\mathcal{E}_{2,#1}(#2)}      
\newcommand{\Ks}[1]{K^{*}(#1)}             



%
%
\DeclareSymbolFont{AMSb}{U}{msb}{m}{n}
\DeclareMathSymbol{\Z}{\mathalpha}{AMSb}{"5A}

%
\DeclareMathSymbol{\nmid}{\mathrel}{AMSb}{"2D}
%
%
%
\DeclareSymbolFont{AMSb}{U}{msb}{m}{n}
\DeclareMathSymbol{\C}{\mathalpha}{AMSb}{"43}
\DeclareMathSymbol{\F}{\mathalpha}{AMSb}{"46}
\DeclareMathSymbol{\N}{\mathalpha}{AMSb}{"4E}
\DeclareMathSymbol{\Q}{\mathalpha}{AMSb}{"51}
\DeclareMathSymbol{\R}{\mathalpha}{AMSb}{"52}
\DeclareMathSymbol{\Z}{\mathalpha}{AMSb}{"5A}

\newcommand{\Hup}{\mathcal{H}} 

\allowdisplaybreaks

\begin{document}
\newcommand{\beqs}{\begin{equation*}}
\newcommand{\eeqs}{\end{equation*}}
\newcommand{\beq}{\begin{equation}}
\newcommand{\eeq}{\end{equation}}
\newcommand{\bal}{\begin{align}}
\newcommand{\eal}{\end{align}}
\newcommand{\bals}{\begin{align*}}
\newcommand{\eals}{\end{align*}}
\newcommand\mylabel[1]{\label{#1}}
\newcommand\eqn[1]{(\ref{eq:#1})}
\newcommand\thm[1]{\ref{thm:#1}}
\newcommand\lem[1]{\ref{lem:#1}}
\newcommand\propo[1]{\ref{propo:#1}}
\newcommand\cor[1]{\ref{cor:#1}}
\newcommand\sect[1]{\ref{sec:#1}}
\newcommand\leg[2]{\genfrac{(}{)}{}{}{#1}{#2}} 
\newcommand\ord{\mbox{ord}\,}
\title[Andrews' spt-function modulo $32760$]{
Congruences for Andrews' spt-function \\
modulo $32760$ and extension of Atkin's\\
Hecke-type partition congruences}


\author{F.~G.~Garvan}
\address{Department of Mathematics, University of Florida, Gainesville,
Florida 32611-8105}
\email{fgarvan@ufl.edu}          
\thanks{The author was supported in part by NSA Grant H98230-09-1-0051.
The first draft of this paper was written October 25, 2010.
}


\subjclass[2010]{Primary 11P83, 11F33, 11F37;
Secondary 11P82, 05A15, 05A17}

\date{\today}  


\keywords{Andrews's spt-function, weak Maass forms, congruences,
partitions, modular forms}

\dedicatory{Dedicated to the memory of A.J. (Alf) van der Poorten, my former 
teacher}

\begin{abstract}
New congruences are found for Andrews' smallest parts partition function 
$\spt(n)$. The generating function for $\spt(n)$ is related to
the holomorphic part $\alpha(24z)$
of a certain weak Maass form $\mathcal{M}(z)$ of weight $\tfrac{3}{2}$.
We show that a normalized form of the generating function for $\spt(n)$ is 
an eigenform modulo $72$ for the Hecke operators $T(\ell^2)$ for
primes $\ell > 3$, and an eigenform modulo $p$ for $p=5$, $7$ or $13$
provided that $(\ell,6p)=1$. The result for the modulus $3$
was observed earlier by the author and 
considered by Ono and Folsom. Similar congruences for higher
powers of $p$ (namely $5^6$, $7^4$ and $13^2$) occur for the coefficients
of the function $\alpha(z)$. Analogous results for the partition
function were found by Atkin in 1966. Our results depend 
on the recent result of Ono that $\mathcal{M}_{\ell}(z/24)$ is 
a weakly holomorphic modular form of weight $\tfrac{3}{2}$ for the full
modular group where
$$
\mathcal{M}_{\ell}(z) = \mathcal{M}(z) \vert T(\ell^2) - 
\leg{3}{\ell} (1 + \ell) \mathcal{M}(z).
$$ 
\end{abstract}

\maketitle

\section{Introduction} \mylabel{sec:intro}

Andrews \cite{An08b} defined the function $\spt(n)$ as the number 
of  
smallest parts in the partitions of $n$. He related this
function to the second rank moment and proved some surprising congruences
mod $5$, $7$ and $13$. 
Rank and crank moments were introduced by  A.~O.~L.~Atkin and the author
\cite{At-Ga}. 
Bringmann \cite{Br08} studied analytic, asymptotic
and congruence
properties of the generating function for the second rank moment as
a quasi-weak Maass form. 
Further congruence properties of Andrews'
spt-function were found by the author \cite{Ga10a}, \cite{Ga10d}, Folsom and Ono \cite{Fo-On}
and Ono \cite{On10}.
In particular, Ono \cite{On10}
proved that if $\leg{1-24n}{\ell}=1$ then
\beq
\spt(\ell^2 n - \tfrac{1}{24}(\ell^2-1)) \equiv 0 \pmod{\ell},
\mylabel{eq:sptellcong}
\eeq
for any prime $\ell \ge 5$. This amazing result was originally conjectured
by the author\footnote{The congruence \eqn{sptellcong} was first
conjectured by the author in a Colloquium given at the University of
Newcastle, Australia on July 17, 2008.}.
Earlier special cases were observed by Tina Garrett \cite{Ga-PC2007}
and her students. Recently the author \cite{Ga10d} has proved the
following congruences for powers of $5$, $7$ and $13$.
For $a$, $b$, $c\ge3$,
\begin{align}
\spt(5^a n + \delta_a) + 5\, \spt(5^{a-2} n + \delta_{a-2}) &\equiv 0  \pmod{5^{2a-3}},
\mylabel{eq:spt5acong}\\
\spt(7^b n + \lambda_b) + 7\, \spt(7^{b-2} n + \lambda_{b-2}) &\equiv 0  \pmod{7^{\lfloor\frac{1}{2}(3b-2)\rfloor}},
\mylabel{eq:spt7bcong}\\
\spt(13^c n + \gamma_c) - 13\, \spt(13^{c-2} n + \gamma_{c-2}) &\equiv 0  \pmod{13^{c-1}},   
\mylabel{eq:spt13ccong}
\end{align}
where $\delta_a$, $\lambda_b$ and $\gamma_c$ are the 
least nonnegative residues of the reciprocals of $24$ mod
$5^a$, $7^b$ and $13^c$ respectively.  

As in \cite{On10}, \cite{Ga10d} we define
\beq
 \mathbf{a}(n) := 12 \spt(n)  + (24n -1) p(n),
\mylabel{eq:adef}
\eeq
for $n\ge 0$, and define
\beq
\alpha(z) := \sum_{n\ge0}  \mathbf{a}(n) q^{n - \tfrac{1}{24}},
\mylabel{eq:alphadef}
\eeq
where as usual $q = \exp(2\pi i z)$ and $\Im(z) > 0$.
We note that $\spt(0)=0$ and $p(0)=1$.
Bringmann \cite{Br08} showed that $\alpha(24z)$ is the
holomorphic part of the weight $\tfrac{3}{2}$ weak Maass form $\mathcal{M}(z)$
on $\Gamma_0(576)$ with Nebentypus
$\chi_{12}$ where
\beq
\mathcal{M}(z) := \alpha(24z) - \frac{3i}{\pi\sqrt{2}} \,   
\int_{-\overline{z}}^{i\infty} \frac{\eta(24\tau) \, d\tau}{(-i(\tau+z))^{\tfrac32}},
\mylabel{eq:Mdef}
\eeq
$\eta(z) := q^{\tfrac{1}{24}}\prod_{n=1}^\infty(1-q^n)$ is the Dedekind
eta-function,  the function $\alpha(z)$ is defined in \eqn{alphadef},
and 
\beq
\chi_{12}(n) = 
\begin{cases}
1 & \mbox{if $n\equiv\pm1\pmod{12}$,}\\
-1 & \mbox{if $n\equiv\pm5\pmod{12}$,}\\
0 & \mbox{otherwise.}
\end{cases}
\mylabel{eq:chi12}
\eeq
Ono \cite{On10} showed that for $\ell\ge5$ prime, the operator
\beq
T(\ell^2) - \chi_{12}(\ell) {\ell} (1 + \ell)
\eeq
annihilates the nonholomorphic part of $\mathcal{M}(z)$, and    
the function 
$\mathcal{M}_{\ell}(z/24)$ is
a weakly holomorphic modular form of weight $\tfrac{3}{2}$ for the full
modular group where
\beq
\mathcal{M}_{\ell}(z) 
= \mathcal{M}(z) \vert T(\ell^2) - \chi_{12}(\ell) (1 + \ell) \mathcal{M}(z)
= \alpha(24z) \vert T(\ell^2) - \chi_{12}(\ell) (1 + \ell) \alpha(24z).
\mylabel{eq:Melldef}
\eeq
In fact he obtained                           
\begin{theorem}[Ono \cite{On10}]
\mylabel{thm:Othm}
If  $\ell\ge 5$ is prime then the function
\beq
\mathcal{M}_{\ell}(z/24) \, \eta(z)^{\ell^2}
\mylabel{eq:Melleta}
\eeq
is an entire modular form of weight $\tfrac{1}{2}(\ell^2+3)$ for
the full modular group $\Gamma(1)$.
\end{theorem}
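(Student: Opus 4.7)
The plan is to take as given the result stated immediately before the theorem, namely that $\mathcal{M}_{\ell}(z/24)$ is a weakly holomorphic modular form of weight $3/2$ on $\Gamma(1)$. Since $\eta(z)^{\ell^2}$ is a holomorphic modular form of weight $\ell^2/2$ on $\Gamma(1)$, the product $\mathcal{M}_{\ell}(z/24)\,\eta(z)^{\ell^2}$ is automatically modular of weight $3/2+\ell^2/2=(\ell^2+3)/2$ on $\Gamma(1)$, with the multiplier system given by the product of the two known multipliers (essentially that of $\eta^{\ell^2+3}$), and holomorphic on $\mathcal{H}$. The proof therefore reduces to showing that the product has no pole at the unique cusp of $\Gamma(1)$, i.e., that its $q$-expansion at $\infty$ contains no negative powers.

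The central step is to bound the order at $\infty$ of $\mathcal{M}_{\ell}(z/24)$ from below. From $\alpha(24z)=\sum_{n\ge 0}\mathbf{a}(n)\,q^{24n-1}$ I record two features of the Fourier expansion: it is supported on exponents $\equiv -1\pmod{24}$, and its smallest exponent is $-1$. Applying $T(\ell^2)$, the standard Hecke formula in weight $3/2$ with character $\chi_{12}$ expresses the coefficient of $q^m$ in $\alpha(24z)\mid T(\ell^2)$ as an integer linear combination of the Fourier coefficients of $\alpha(24z)$ at indices $\ell^2 m$, $m$, and $m/\ell^2$. Of these, only the third term can push $m$ below $-1$, and it forces $m\ge -\ell^2$; the bound is actually attained, since $\mathbf{a}(0)=-1\neq 0$ and, crucially, $\ell^2\equiv 1\pmod{24}$ for any prime $\ell\ge 5$, so that $-\ell^2\equiv -1\pmod{24}$ falls inside the admissible support. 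Subtracting $\chi_{12}(\ell)(1+\ell)\,\alpha(24z)$ merely reintroduces a $q^{-1}$ pole, still well above $q^{-\ell^2}$, so $\mathcal{M}_{\ell}(z)$ has smallest Fourier exponent $-\ell^2$, and rescaling $z\mapsto z/24$ gives an expansion of $\mathcal{M}_{\ell}(z/24)$ starting at $q^{-\ell^2/24}$.

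Since $\eta(z)^{\ell^2}=q^{\ell^2/24}\prod_{n\ge 1}(1-q^n)^{\ell^2}$ begins exactly at $q^{\ell^2/24}$, the product $\mathcal{M}_{\ell}(z/24)\,\eta(z)^{\ell^2}$ has Fourier expansion supported in nonnegative powers of $q$ and is therefore holomorphic at $\infty$. Combined with holomorphy on $\mathcal{H}$ and the transformation law already noted, this proves that $\mathcal{M}_{\ell}(z/24)\,\eta(z)^{\ell^2}$ is an entire modular form of weight $(\ell^2+3)/2$ on $\Gamma(1)$. The one delicate point, and the main obstacle, is the tight balance between the pole and the zero: it is precisely the combination of the congruence $\ell^2\equiv 1\pmod{24}$ and the support of $\alpha(24z)$ modulo $24$ that forces $T(\ell^2)$ to push the exponent down to exactly $-\ell^2$ and no further, matching perfectly the vanishing order $\ell^2/24$ contributed by $\eta^{\ell^2}$ after the $z\mapsto z/24$ rescaling.
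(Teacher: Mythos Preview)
The paper does not prove this theorem at all: it is stated as a result of Ono, cited from \cite{On10}, and used as a black box throughout. Both the preliminary claim that $\mathcal{M}_\ell(z/24)$ is weakly holomorphic of weight $3/2$ on $\Gamma(1)$ and Theorem~\thm{Othm} itself are attributed to Ono, the latter introduced with ``In fact he obtained''. So there is no proof in the paper to compare against.

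Your argument correctly fills in the elementary half of the passage from ``weakly holomorphic on $\Gamma(1)$'' to the theorem: the Hecke computation showing that $\alpha(24z)\mid T(\ell^2)$ is supported on exponents $\ge -\ell^2$, with equality realized because $\ell^2\equiv 1\pmod{24}$ and $\mathbf{a}(0)=-1$, is right, and the resulting pole $q^{-\ell^2/24}$ of $\mathcal{M}_\ell(z/24)$ is exactly cancelled by $\eta(z)^{\ell^2}=q^{\ell^2/24}\prod(1-q^n)^{\ell^2}$. But you should be aware that the substantive content of Ono's theorem is precisely the transformation law on $\Gamma(1)$ that you are taking as given; the order bookkeeping at $i\infty$ is the routine part.

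One genuine slip: your parenthetical on the multiplier is wrong. The function $\mathcal{M}_\ell(z/24)$ does not carry the multiplier of $\eta^3$; it carries the inverse eta multiplier $\varepsilon^{-1}$ (as one sees already from the shape of $\alpha(z)=\sum \mathbf{a}(n)q^{n-1/24}$, or from the pieces $-E_2(z)/\eta(z)$ and $1/\eta(z)$ out of which it is built). Hence the product $\mathcal{M}_\ell(z/24)\,\eta(z)^{\ell^2}$ has multiplier $\varepsilon^{\ell^2-1}$, which is trivial since $24\mid \ell^2-1$ for primes $\ell\ge 5$. Your claimed multiplier $\varepsilon^{\ell^2+3}=\varepsilon^{4}$ is \emph{not} trivial on $\Gamma(1)$, so with that multiplier the conclusion would actually fail.
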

Applying this theorem Ono obtained
\beq
\mathcal{M}_{\ell}(z) \equiv 0 \pmod{\ell}.
\mylabel{eq:Mellcong}
\eeq
The congruence \eqn{sptellcong} then follows easily.

Folsom and Ono \cite{Fo-On} sketched the proof of the following
\begin{theorem}[Folsom and Ono]
\mylabel{thm:FOthm}
If  $\ell\ge 5$ is prime then
\beq
\spt(\ell^2 n - s_\ell)
+ \chi_{12}(\ell) \leg{1-24n}{\ell} \spt(n)
+ \ell \,\spt\left( \frac{n + s_\ell}{\ell^2} \right)
\equiv
 \chi_{12}(\ell) \, (1 + \ell)\, \spt(n) \pmod{3},
\mylabel{eq:sptmod3}
\eeq
where
\beq
s_{\ell} = \frac{1}{24}(\ell^2 - 1).
\mylabel{eq:selldef}
\eeq
\end{theorem}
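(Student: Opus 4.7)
My plan is to derive the theorem from Theorem~\thm{Othm} by explicitly computing the $q^{24n-1}$ coefficient of $\mathcal{M}_\ell(z)$ and then isolating the spt-contribution modulo $3$ via a mod-$3$ simplification of the Eisenstein series $E_2$.

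First I would apply Shimura's formula for the weight-$\tfrac{3}{2}$ Hecke operator $T(\ell^2)$ to $\alpha(24z)=\sum_{n\ge 0}\mathbf{a}(n)q^{24n-1}$. The identities $24(\ell^2 n-s_\ell)-1=\ell^2(24n-1)$ and $24(n+s_\ell)/\ell^2-1=(24n-1)/\ell^2$ let me write the $q^{24n-1}$ coefficient of $\mathcal{M}_\ell(z)$ neatly as
\beqs
\mathbf{b}(n):=\mathbf{a}(\ell^2 n-s_\ell)+\chi_{12}(\ell)\leg{1-24n}{\ell}\mathbf{a}(n)+\ell\,\mathbf{a}\!\left(\tfrac{n+s_\ell}{\ell^2}\right)-\chi_{12}(\ell)(1+\ell)\mathbf{a}(n).
\eeqs
Substituting $\mathbf{a}(n)=12\spt(n)+(24n-1)p(n)$ splits $\mathbf{b}(n)=12\,s_\ell(n)+P(n)$, where $s_\ell(n)$ denotes the LHS minus RHS of \eqn{sptmod3} and $P(n)$ is the analogous partition-combination (with a common factor $(24n-1)$).

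Second, I would establish and exploit the identity
\beqs
12\sum_{n\ge 0}\spt(n)\,q^{24n-1}=\alpha(24z)+E_2(24z)\,\eta(24z)^{-1},
\eeqs
which follows from $\mathbf{a}(n)-(24n-1)p(n)=12\spt(n)$ combined with $q\,\tfrac{d}{dq}\eta(24z)^{-1}=-E_2(24z)\eta(24z)^{-1}$ (itself a consequence of $q\,\tfrac{d}{dq}\log\eta=E_2/24$). Applying $T(\ell^2)-\chi_{12}(\ell)(1+\ell)$ to both sides expresses $12\,s_\ell(n)$ as $\mathbf{b}(n)$ plus the $q^{24n-1}$ coefficient of the Hecke-transformed $E_2(24z)\eta(24z)^{-1}$.

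Third, reducing modulo $3$ I would use $E_2\equiv 1\pmod 3$ (since the Fourier coefficients $-24\sigma_1(n)$ of $E_2$ are divisible by $3$); this collapses the correction term to the formal weight-$\tfrac{3}{2}$ Hecke action on $\eta(24z)^{-1}=\sum p(n)q^{24n-1}$. Combined with Theorem~\thm{Othm} (so $\mathbf{b}(n)\equiv 0\pmod\ell$) and the Hecke-eigenform property of the theta series $\eta(24z)=\sum\chi_{12}(n)q^{n^2}$ for the appropriate weight-$\tfrac{1}{2}$ Hecke operator, this yields the claimed spt-congruence.

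The main obstacle is that $12\equiv 0\pmod 3$ makes a direct mod-$3$ reduction of $12\,s_\ell(n)$ vacuous. To extract genuine information on $s_\ell(n)$ one must lift to modulo $9$, employing the refinement $E_2\equiv 1+3\sum_{n\ge 1}\sigma_1(n)q^n\pmod 9$ together with the classical identity $np(n)=\sum_{k=1}^n\sigma_1(k)p(n-k)$. Combining the resulting mod-$9$ congruence with Ono's mod-$\ell$ result yields $12\,s_\ell(n)\equiv 0\pmod 9$, equivalently $s_\ell(n)\equiv 0\pmod 3$.
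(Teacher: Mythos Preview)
Your decomposition $12\,\mathcal{S}_\ell=\mathcal{A}_\ell-\Xi_\ell$ and your recognition that one must work modulo $9$ (not $3$) are both correct and match the paper's set-up. The gap is in how you control $\mathcal{A}_\ell$ modulo $9$. You invoke Theorem~\thm{Othm} only through its corollary $\mathbf{b}(n)\equiv 0\pmod{\ell}$, but since $\ell\ge5$ is coprime to $3$ this says nothing whatsoever about $\mathcal{A}_\ell$ modulo $9$; your final sentence ``Combining the resulting mod-$9$ congruence with Ono's mod-$\ell$ result yields $12\,s_\ell(n)\equiv0\pmod{9}$'' is therefore a non sequitur. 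Likewise, the Hecke-eigenform property of the theta series $\eta(24z)$ (weight $\tfrac12$) does not translate into any eigen-relation for the \emph{formal} weight-$\tfrac32$ Hecke action on $\eta(24z)^{-1}$, so that ingredient does no work either.

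What the paper actually uses from Theorem~\thm{Othm} is not the mod-$\ell$ congruence but the modularity itself: $\mathcal{A}_\ell(z)\,\eta(z)\,\Delta(z)^{s_\ell}\in M_{2+12s_\ell}(\Gamma(1))$, so it is an \emph{integral} linear combination of the basis $\{E_4^{3n-1}E_6\,\Delta^{s_\ell-n}:1\le n\le s_\ell\}$. On the other side, one needs an explicit formula for $\ell\,\Xi_\ell\,\eta\,\Delta^{s_\ell}$ in terms of $E_2,E_4,E_6,\Delta$ (the paper obtains this by differentiating Atkin's identity $Z_\ell\,\eta=C_\ell(j)$), and then the congruences $E_2\equiv E_4^{5}+18\Delta$ and $E_6\equiv E_4^{6}\pmod{27}$ to reduce that expression into the same basis plus an $E_2\,\Delta^{s_\ell}$ term. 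Subtracting, $12\ell\,\mathcal{S}_\ell\,\eta\,\Delta^{s_\ell}$ is congruent mod $27$ to an integral combination of forms each of order $\le s_\ell$ at $i\infty$; but $\mathcal{S}_\ell\,\eta\,\Delta^{s_\ell}$ itself has order $s_\ell+1$ there, forcing every coefficient to vanish mod $27$ and hence $\mathcal{S}_\ell\equiv0\pmod{9}$. Your outline lacks both this basis/valuation argument and any substitute for it.
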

This result was observed earlier by the author. In this paper we prove
a much stronger result.
\begin{theorem}
\mylabel{thm:mainthm}
\begin{enumerate}
\item[(i)]
If $\ell\ge 5$ is prime then 
\beq
\spt(\ell^2 n - s_\ell)
+ \chi_{12}(\ell) \leg{1-24n}{\ell} \spt(n)
+ \ell \,\spt\left( \frac{n + s_\ell}{\ell^2} \right)
\equiv
 \chi_{12}(\ell) \, (1 + \ell) \, \spt(n) \pmod{72}.
\mylabel{eq:sptmod72}
\eeq
\item[(ii)]
If $\ell\ge 5$ is prime, $t=5$, $7$ or $13$ and $\ell\ne t$ then
\beq
\spt(\ell^2 n - s_\ell)
+ \chi_{12}(\ell) \leg{1-24n}{\ell} \spt(n)
+ \ell \,\spt\left( \frac{n + s_\ell}{\ell^2} \right)
\equiv
 \chi_{12}(\ell) \, (1 + \ell) \, \spt(n) \pmod{t}.
\mylabel{eq:sptmodt}
\eeq
\end{enumerate}
\end{theorem}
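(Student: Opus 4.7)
My plan is to reduce both parts of Theorem~\thm{mainthm} to Hecke-type congruences for the coefficients $\mathbf{a}(n)$ of $\alpha(z)$, deduced from Ono's Theorem~\thm{Othm}, combined with Atkin's 1966 Hecke-type congruences for $p(n)$. The bridge is the defining identity $\mathbf{a}(n) = 12\,\spt(n) + (24n-1)\,p(n)$ from \eqn{adef}.

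First I would make \eqn{Melldef} explicit at the coefficient level. Expanding the weight-$\tfrac{3}{2}$ Hecke operator $T(\ell^2)$ with nebentype $\chi_{12}$ on $\alpha(24z)=\sum_n \mathbf{a}(n)q^{24n-1}$ gives $\mathcal{M}_\ell(z)=\sum_n A_\ell(n)\,q^{24n-1}$, where
$$A_\ell(n) := \mathbf{a}(\ell^2 n - s_\ell) + \chi_{12}(\ell)\!\left(\tfrac{1-24n}{\ell}\right)\!\mathbf{a}(n) + \ell\,\mathbf{a}\!\left(\tfrac{n + s_\ell}{\ell^2}\right) - \chi_{12}(\ell)(1+\ell)\,\mathbf{a}(n).$$
Substituting $\mathbf{a}(m)=12\spt(m)+(24m-1)p(m)$ and simplifying via $\ell^2-1=24s_\ell$ (so, e.g., $24(\ell^2 n-s_\ell)-1=\ell^2(24n-1)$), one obtains the clean decomposition $A_\ell(n) = 12\,S_\ell(n) + (24n-1)\,Q_\ell(n)$, with $S_\ell(n)$ equal to the left-hand side of \eqn{sptmod72} minus its right-hand side and
$$Q_\ell(n) := \ell^2\, p(\ell^2 n - s_\ell) + \chi_{12}(\ell)\!\left(\tfrac{1-24n}{\ell}\right)\!p(n) + \tfrac{1}{\ell}\,p\!\left(\tfrac{n+s_\ell}{\ell^2}\right) - \chi_{12}(\ell)(1+\ell)\,p(n).$$
(The factor $1/\ell$ is absorbed by the $\ell^2$ dividing $24n-1$ whenever the third summand is nonzero, so $(24n-1)Q_\ell(n)$ is always an integer.) Thus Theorem~\thm{mainthm}(ii) for $t\in\{5,7,13\}$ is equivalent to $A_\ell(n)\equiv (24n-1)\,Q_\ell(n)\pmod{t}$ (since $\gcd(12,t)=1$), and (i) to the same congruence $\pmod{864}$.

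Next I would set up a modular-form framework. Ono's Theorem~\thm{Othm} supplies the entire weight-$(\ell^2+3)/2$ modular form $F_\ell := \mathcal{M}_\ell(z/24)\,\eta(z)^{\ell^2}$ on $\Gamma(1)$. Running the same weight-$\tfrac{3}{2}$ Hecke construction with $1/\eta(24z) = \sum_n p(n)\,q^{24n-1}$ in place of $\alpha(24z)$ produces a companion entire form $F_\ell'$ of the same weight on $\Gamma(1)$ whose coefficients package the $(24n-1)Q_\ell(n)$-side (convolved with the same $\prod(1-q^m)^{\ell^2}$). The congruence $F_\ell \equiv F_\ell' \pmod{m}$ on $\Gamma(1)$ then reduces by the Sturm bound to a comparison of $\lfloor(\ell^2+3)/24\rfloor + 1$ initial $q$-coefficients; for $m = t\in\{5,7,13\}$ this comparison is essentially Atkin's 1966 Hecke-eigenform theorem for $p(n)$ modulo $t$, the hypothesis $\ell\ne t$ ensuring Atkin's eigenvalue is well-defined.

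For the mod-$72$ statement I would split into mod $8$ and mod $9$ and combine by CRT. The mod-$9$ piece combines Ramanujan-Watson-style congruences for $p(n)$ with $\mathbf{a}(n)\equiv 3\,\spt(n) + (6n-1)\,p(n)\pmod 9$. The mod-$8$ piece is the principal technical hurdle: there is no Atkin-style Hecke congruence for $p(n)$ mod $8$ to invoke. My plan is to exploit that $\eta(z)^{\ell^2}\pmod 8$ depends on $\ell$ only through $\ell^2\pmod{24}$ (hence on finitely many residue classes of $\ell$), and then in each such class to identify $F_\ell - F_\ell' \pmod 8$ via the Sturm bound in the finite-dimensional mod-$8$ space of weight-$(\ell^2+3)/2$ modular forms on $\Gamma(1)$.
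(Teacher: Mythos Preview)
Your decomposition $A_\ell(n)=12\,S_\ell(n)+(24n-1)Q_\ell(n)$ is exactly the paper's $\mathcal{A}_\ell=12\,\mathcal{S}_\ell+\Xi_\ell$ (equation \eqn{ASXi}), so you have identified the right skeleton. The proposal breaks down, however, at the claim that the companion object $F_\ell'$ is an \emph{entire modular form} on $\Gamma(1)$. The series whose $(24n-1)$st coefficient is $(24n-1)Q_\ell(n)$ is $\Xi_\ell(z)$, and the paper's Theorem~\thm{dHecke} shows explicitly that $\ell\,\Xi_\ell(z)\,\eta(z)\,\Delta(z)^{s_\ell}$ is a $\Z$-linear combination of terms $E_4^{3n-1}\Delta^{s_\ell-n}(24nE_6+E_4E_2)$ together with $\chi_{12}(\ell)\ell(1+\ell)E_2\Delta^{s_\ell}$. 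These all contain $E_2$, so $F_\ell'$ is only \emph{quasimodular}; no Sturm-bound argument on $\Gamma(1)$ applies to $F_\ell-F_\ell'$ directly. The paper's actual device is to prove the congruences $E_2\equiv E_4^2E_6\pmod{65520}$, $E_2\equiv E_4E_6+16\Delta\pmod{32}$, and $E_2\equiv E_4^5+18\Delta\pmod{27}$, which replace $E_2$ by genuine modular forms modulo the relevant integers; only then does $12\ell\,\mathcal{S}_\ell\,\eta\,\Delta^{s_\ell}$ land (modulo $m$) in a space spanned by forms of order $\le s_\ell$ at $i\infty$, and the vanishing follows because $\mathcal{S}_\ell\,\eta\,\Delta^{s_\ell}$ has order $s_\ell+1$.

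Two further problems. First, the appeal to Atkin's 1966 theorem does not do what you need: Atkin's congruence \eqn{ptnmod} holds only for $n$ with $\bigl(\tfrac{1-24n}{t}\bigr)=-1$ and involves an undetermined constant $\gamma_t$, so it cannot certify the first $\lfloor(\ell^2+3)/24\rfloor+1$ coefficients of $F_\ell-F_\ell'$ for all $n$. (In fact the paper's proof of Theorem~\thm{mainthm} does not invoke Atkin's theorem at all.) Second, the mod-$9$ plan is unfounded: there are no Ramanujan--Watson congruences for $p(n)$ modulo $3$ or $9$ to combine with. The paper instead obtains the mod-$9$ and mod-$8$ cases by the same mechanism as part~(ii), using the sharper $E_2$-congruences modulo $27$ and $32$ respectively and then dividing $12\mathcal{S}_\ell\equiv0$ through by the relevant gcd.
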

Of course this implies the
\begin{corollary}
\mylabel{cor:Jell}
If $\ell$ is prime and $\ell\not\in\{2,3,5,7,13\}$ then          
\beq
\spt(\ell^2 n - s_\ell)
+ \chi_{12}(\ell) \leg{1-24n}{\ell} \spt(n)
+ \ell \,\spt\left( \frac{n + s_\ell}{\ell^2} \right)
\equiv
 \chi_{12}(\ell) \, (1 + \ell) \, \spt(n) \pmod{32760}.
\mylabel{eq:sptmod}
\eeq
\end{corollary}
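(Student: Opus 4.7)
The plan is purely arithmetic: recognize the modulus $32760$ as a product of pairwise coprime factors, each of which is already covered by a piece of Theorem \ref{thm:mainthm}, and then splice the resulting congruences together via the Chinese Remainder Theorem.

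First I would factor
\beqs
32760 = 72 \cdot 5 \cdot 7 \cdot 13
\eeqs
and note that these four factors are pairwise coprime, since $\gcd(72,\,5\cdot 7\cdot 13) = 1$ and $5$, $7$, $13$ are distinct primes. Consequently, proving a congruence modulo $32760$ is equivalent to proving the simultaneous system of congruences modulo $72$, $5$, $7$, and $13$.

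Next, I would appeal to Theorem \ref{thm:mainthm}(i) for the mod $72$ piece: it applies to every prime $\ell \geq 5$, and in particular to every $\ell$ permitted by the hypothesis of the corollary (which forces $\ell \geq 11$). For each $t \in \{5,7,13\}$ separately, the hypothesis $\ell \notin \{2,3,5,7,13\}$ guarantees $\ell \neq t$, so Theorem \ref{thm:mainthm}(ii) supplies the mod $t$ congruence.

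Finally, assembling these four congruences by the Chinese Remainder Theorem yields the claim modulo $\lcm(72,5,7,13) = 32760$. There is no substantive obstacle in this step: the real work resides inside Theorem \ref{thm:mainthm}, and the corollary amounts to a bookkeeping consolidation of its two parts.
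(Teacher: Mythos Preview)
Your argument is correct and matches the paper's approach exactly: the paper merely remarks that Theorem \ref{thm:mainthm} ``of course'' implies the corollary, noting the factorization $32760 = 2^3\cdot 3^2\cdot 5\cdot 7\cdot 13$, and your write-up spells out precisely this Chinese Remainder Theorem consolidation.
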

This congruence modulo $32760=2^3\cdot 3^2 \cdot 5 \cdot 7 \cdot 13$
is the congruence referred in the title of this paper.

In 1966, Atkin \cite{At68b} found a similar congruence for the partition
function.
\begin{theorem}[Atkin]
\mylabel{thm:Atkin}
Let $t=5$, $7$, or $13$, and $c=6$, $4$, or $2$ respectively.
Suppose $\ell \ge 5$ is prime and $\ell\ne t$. If $\leg{1-24n}{t}=-1$, then
\beq
\ell^3 \, p(\ell^2 n - s_\ell)
+ \ell \chi_{12}(\ell) \leg{1-24n}{\ell} p(n)
+  \,p\left( \frac{n + s_\ell}{\ell^2} \right)
\equiv
 \gamma_t \, p(n) \pmod{t^c},
\mylabel{eq:ptnmod}
\eeq
where $\gamma_t$ is an integral constant independent of $n$.
\end{theorem}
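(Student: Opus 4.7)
The strategy is to mirror the proof of Theorem \ref{thm:mainthm}, but applied to the partition generating function rather than to $\alpha(24z)$. Consider
$$F(z) := \eta(24z)^{-1} = \sum_{n\ge 0} p(n)\,q^{24n-1},$$
a weakly holomorphic modular form of weight $-\tfrac12$ on $\Gamma_0(576)$ with Nebentypus $\chi_{12}$. With the standard half-integral weight Hecke operator $T(\ell^2)$ in weight $-\tfrac12$, a direct comparison of $q$-expansions shows that the coefficient of $q^{24n-1}$ in $\ell^3\,F(z)\,\vert\,T(\ell^2)$ is exactly
$$\ell^3\,p(\ell^2 n - s_\ell) + \chi_{12}(\ell)\leg{1-24n}{\ell}\ell\,p(n) + p\!\left(\tfrac{n+s_\ell}{\ell^2}\right).$$
Thus \eqn{ptnmod} is equivalent to asserting that, on the arithmetic progression cut out by $\leg{1-24n}{t}=-1$, the form $\ell^3\,F\,\vert\,T(\ell^2)$ agrees with $\gamma_t\,F$ modulo $t^c$ for some integer $\gamma_t$ independent of $n$.

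To prove this I would follow the structural path of Ono's Theorem \ref{thm:Othm}: set $H_\ell(z) := \ell^3\,F(z)\,\vert\,T(\ell^2) - \gamma_t\,F(z)$, and multiply $H_\ell(z/24)$ by a suitable power of $\eta(z)$ to land inside a finite-dimensional space of entire integer- or half-integer-weight cusp forms on a small congruence subgroup. The restriction to $\leg{1-24n}{t}=-1$ is then effected by applying the quadratic twist by the Legendre character modulo $t$ (after the shift coming from the exponent $24n-1$). Inside this twisted space, the problem reduces to proving that the twisted form is identically zero modulo $t^c$, which by the Sturm bound is a finite check once $\gamma_t$ is pinned down. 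The value of $\gamma_t$ is then identified by matching a few initial coefficients of $p(n)$ mod $t^c$, and its independence of $n$ follows from the tight control the genus-zero curves $X_0(5)$, $X_0(7)$, $X_0(13)$ afford on the mod-$t$ Hecke algebra.

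The main obstacle is handling the higher prime powers, particularly $t^c=5^6$ and $7^4$. For $t^c=13^2$ the argument is essentially parallel to the $\spt$-case proved in Theorem \ref{thm:mainthm}(ii): the Hecke algebra modulo $13^2$ on the twisted space is small enough that a bounded computation settles it. For $5^6$ and $7^4$, however, one must iterate the classical Ramanujan--Watson--Atkin $U(t)$-congruences for $p(n)$ (the partition-function ancestors of \eqn{spt5acong}--\eqn{spt7bcong}) and track how the $t$-adic filtration of $H_\ell$ lifts through successive powers of $t$. Atkin's original 1966 proof carried out this bookkeeping explicitly in terms of eta-quotient generators for the function field on $\Gamma_0(t)$; a modern reformulation via Galois representations attached to the few small-weight cusp forms would also work, but in either case the control of $U(t)$-multiplicities is the dominant technical burden.
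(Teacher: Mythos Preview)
This theorem is cited in the paper as Atkin's result and is not proved there; however, Section~\ref{sec:proofthmaAtkincong} explicitly extends Atkin's argument to weight~$2$ to prove Theorem~\ref{thm:aAtkincong}, so one can read off from it what Atkin's original (weight~$0$) proof looks like. Compared with that, your proposal is headed in a different direction and has a real gap.

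The gap is uniformity in $\ell$. Your plan is to multiply $H_\ell(z/24)$ by a power of $\eta$ to obtain an entire form, twist to isolate the progression $\leg{1-24n}{t}=-1$, and then verify vanishing mod $t^c$ by a Sturm-bound computation. But the weight of that entire form is $\tfrac12(\ell^2+3)$ (or something of that order), so the Sturm bound grows with $\ell$, and ``a finite check'' only handles one prime $\ell$ at a time. The statement is for \emph{all} primes $\ell\ge5$ with $\ell\ne t$, so no finite computation of this kind can close the argument. Your final sentence gestures toward the right structural input (genus zero of $X_0(t)$), but the body of the proposal does not actually use it.

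Atkin's method, and the paper's weight-$2$ extension of it, avoids this problem by never invoking a Sturm bound. The steps are: (1) by Proposition~\ref{propo:AtkinLemma1}, $Z_\ell(z)\,\eta(z)$ is a polynomial $C_\ell(j(z))$ in $j$, for every $\ell$; (2) Lemma~\ref{lem:Lem4} (Atkin's Lemma~4) says that $j(z)\equiv G_t(z)+\mathrm{const}\pmod{t^c}$ up to a controlled correction term, so modulo $t^c$ one has $Z_\ell(z)\equiv K(G_t(z))/\eta(z)$ for an integral polynomial $K$ in the Hauptmodul $G_t$; (3) Atkin's Lemma~3 (the weight-$0$ prototype of Proposition~\ref{propo:wt2AtkinLemma3}) then shows, \emph{structurally and for any such $K$}, that the coefficients $\beta_t(n)$ of $K(G_t)/\eta$ vanish whenever $\leg{1-24n}{t}=-\leg{1-24m}{t}$, where $m$ is the leading exponent. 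Here $m=-s_\ell$, so $1-24m=\ell^2$ is a square mod $t$, and the vanishing on $\leg{1-24n}{t}=-1$ follows. The constant $\gamma_t$ drops out of step~(2) as the $G_t^0$-contribution. Nothing in this chain depends on a bound that grows with $\ell$; that is the point you are missing.

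If you want to salvage your outline, replace the Sturm-bound step by the genus-zero input you mention only in passing: prove the analogue of Proposition~\ref{propo:wt2AtkinLemma3} in weight~$0$ (this is Atkin's Lemma~3), and feed it the reduction of $j$ to $G_t$ mod $t^c$.
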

We find that there is a corresponding result for the function $\mathbf{a}(n)$
defined in \eqn{adef}.
\begin{theorem}
\mylabel{thm:aAtkincong}
Let $t=5$, $7$, or $13$, and $c=6$, $4$, or $2$ respectively.
Suppose $\ell \ge 5$ is prime and $\ell\ne t$.
If $\leg{1-24n}{t}=-1$, then
\beq
\mathbf{a}(\ell^2 n - s_\ell)
+ \chi_{12}(\ell) \leg{1-24n}{\ell} \mathbf{a}(n)
+ \ell \,\mathbf{a}\left( \frac{n + s_\ell}{\ell^2} \right)
\equiv
 \chi_{12}(\ell) \, (1 + \ell) \, \mathbf{a}(n) \pmod{t^c}.
\mylabel{eq:amod}
\eeq
\end{theorem}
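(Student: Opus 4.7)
The plan is to reinterpret \eqref{eq:amod} as a Fourier-coefficient vanishing statement for the weakly holomorphic modular form $\mathcal{M}_\ell$ and then apply Atkin's modular-forms-modulo-prime-powers machinery, as in his proof of Theorem \ref{thm:Atkin}. The Hecke operator $T(\ell^2)$ acting on the weight $\tfrac{3}{2}$, character-$\chi_{12}$ form $\alpha(24z) = \sum_{n\ge 0}\mathbf{a}(n)q^{24n-1}$ produces at $q^{24n-1}$ exactly the three-term sum
$$\mathbf{a}(\ell^2 n - s_\ell) + \chi_{12}(\ell)\leg{1-24n}{\ell}\mathbf{a}(n) + \ell\,\mathbf{a}\!\left(\frac{n + s_\ell}{\ell^2}\right),$$
so the difference of the two sides of \eqref{eq:amod} is precisely $[q^{24n-1}]\,\mathcal{M}_\ell(z)$. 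The theorem thus reduces to showing $[q^{24n-1}]\,\mathcal{M}_\ell(z) \equiv 0 \pmod{t^c}$ whenever $\leg{1-24n}{t} = -1$.

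Next I would sieve out the target arithmetic progression using the quadratic twist by $\chi_t = \leg{\cdot}{t}$. Writing $\mathcal{M}_\ell^-$ for the part of $\mathcal{M}_\ell$ supported on exponents $24n-1$ with $\leg{1-24n}{t}=-1$, we have $2\mathcal{M}_\ell^- = \mathcal{M}_\ell - \mathcal{M}_\ell^{\chi_t}$, where $\mathcal{M}_\ell^{\chi_t}$ denotes the $\chi_t$-twist. By Theorem \ref{thm:Othm}, $\mathcal{M}_\ell(z/24)\,\eta(z)^{\ell^2}$ is an entire modular form of weight $\tfrac{1}{2}(\ell^2+3)$ on the full modular group $\Gamma(1)$. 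Standard level-raising under quadratic twisting then places a suitable integral multiple of $\mathcal{M}_\ell^-(z/24)\,\eta(z)^{\ell^2}$ inside a controlled finite-dimensional space of entire holomorphic modular forms on $\Gamma_0(t^2)$ of the same weight.

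The main step is to apply Atkin's machinery modulo $t^c$ to this space: analyze the action of the operators $U_t$ and $V_t$ on an $\eta$-quotient basis, reduce modulo $t^c$, and verify the desired Fourier-coefficient vanishing via a Sturm-type bound that cuts the claim down to finitely many coefficient checks. The exponents $c = 6, 4, 2$ for $t = 5, 7, 13$ should arise, exactly as in Atkin's original proof of Theorem \ref{thm:Atkin}, from the depth at which the $U_t$-action on the relevant basis stabilizes modulo $t$.

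The main obstacle I anticipate is the explicit computational bookkeeping at $t^c = 5^6$, where the controlled mod-$t^c$ space is still several dimensional and the verification of the required eigen-relations is delicate. A decisive structural simplification is that $\mathcal{M}_\ell(z/24)$ is modular on the \emph{full} modular group rather than on a congruence subgroup of level $576$, so the space into which $\mathcal{M}_\ell^-$ naturally lands modulo $t^c$ is drastically smaller than for generic weight-$\tfrac{3}{2}$ forms of that level; this is the structural reason one expects the same exponents $5^6$, $7^4$, $13^2$ that appear in Theorem \ref{thm:Atkin} for $p(n)$ to also suffice for $\mathbf{a}(n)$ here.
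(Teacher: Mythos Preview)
Your opening reformulation is correct: the claim is equivalent to showing that the coefficient of $q^{n-1/24}$ in $\mathcal{A}_\ell(z)=\mathcal{M}_\ell(z/24)$ vanishes modulo $t^c$ whenever $\leg{1-24n}{t}=-1$. But the twist-then-Sturm-bound strategy has a genuine gap. The weight of $\mathcal{M}_\ell(z/24)\,\eta(z)^{\ell^2}$ is $\tfrac12(\ell^2+3)$, so any Sturm bound on $\Gamma_0(t^2)$ is of size $\asymp \ell^2$. A finite coefficient check of that length verifies the statement for one fixed $\ell$, but cannot establish the theorem uniformly for all primes $\ell\ge 5$ with $\ell\ne t$. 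Nothing in your outline explains how the argument is to be made $\ell$-uniform; the phrase ``finitely many coefficient checks'' hides an unbounded family of checks.

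The paper's proof avoids twisting and Sturm bounds entirely. From \eqref{eq:Aellid} one rewrites
\[
\mathcal{A}_\ell(z)=\sum_{n=1}^{s_\ell} b_{n,\ell}\,\frac{E_6(z)}{E_4(z)}\,\frac{j(z)^n}{\eta(z)},
\]
and then reduces $j$ and $\tfrac{E_6}{E_4}j$ modulo $t^c$ in terms of the Hauptmodul $G_t$ and the weight-$2$ Eisenstein series $\mathcal{E}_{2,t}$ (Lemmas~\ref{lem:aLem4}, \ref{lem:Lem4}, \ref{lem:aLem4b}). This yields $\mathcal{A}_\ell\equiv \mathcal{E}_{2,t}\,K/\eta \pmod{t^c}$ with $K$ a polynomial in $G_t$ of degree $s_\ell$. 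The decisive ingredient is Proposition~\ref{propo:wt2AtkinLemma3}, a weight-$2$ analogue of Atkin's Lemma~3: if the coefficients $\beta_t(n)$ of $\mathcal{E}_{2,t}K/\eta$ vanish for $m+1\le n\le -1$ (here $m=-s_\ell$, and this vanishing is automatic from the principal part of $\mathcal{A}_\ell$), then $\beta_t(n)=0$ for \emph{all} $n$ with $\leg{1-24n}{t}=-\leg{1-24m}{t}=-1$. That proposition is proved by building an auxiliary weight-$2$ form $B(z)$ on $\Gamma_0(t)$ that is holomorphic at both cusps and vanishes at $0$; since $\dim S_2(\Gamma_0(t))=0$ for $t\in\{5,7,13\}$, $B$ is identically zero, and this forces an \emph{exact} (not merely modular) vanishing of the $\beta_t(n)$ on the progression. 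Everything here is uniform in $\ell$.

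Your intuition that $(t,c)=(5,6)$ is the delicate case is right, but the resolution is not further computation: the reduction modulo $5^6$ leaves a single extra term $2\cdot 5^5\, b_{1,\ell}\,\mathcal{E}_{2,5}G_5^{-1}/\eta$, and the paper eliminates it by proving $b_{1,\ell}\equiv 0\pmod 5$ as a corollary of Theorem~\ref{thm:mainthm}(ii), the mod-$5$ congruence for $\spt$ established in Section~\ref{sec:proofmainthm}.
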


In Section \sect{proofmainthm} we prove Theorem \thm{mainthm}.
The method involves reviewing the action of weight $-\tfrac{1}{2}$ 
Hecke operators $T(\ell^2)$
on the function $\eta(z)^{-1}$ and 
doing a careful study of the action of weight $\tfrac{3}{2}$ 
Hecke operators on the function $\frac{d}{dz} \eta(z)^{-1}$ modulo 
$5$, $7$, $13$, $27$ and $32$. In Section \sect{proofthmaAtkincong}
we prove Theorem \thm{aAtkincong}. The method involves extending
Atkin's \cite{At68b} on modular functions to weight two
modular forms on $\Gamma_0(t)$ for $t=5$, $7$ and $13$.
The proof of both Theorems \thm{mainthm} and \thm{aAtkincong}
depend on Ono's Theorem \thm{Othm}.

\section{Proof of Theorem \thm{mainthm}} \mylabel{sec:proofmainthm}

In this section we prove Theorem \thm{mainthm}.
Atkin \cite{At68} showed essentially that applying certain weight
$-\tfrac{1}{2}$ Hecke operators $T(\ell^2)$ to the function $\eta(z)^{-1}$    
produces a function with the same multiplier system as $\eta(z)^{-1}$
and thus $\eta(z)$ times this function is a certain polynomial 
(depending on $\ell$)
of Klein's modular invariant $j(z)$. 
We review Ono's \cite{On10b}  recent explicit
form for these polynomials. Although our proof does not depend
on Ono's result it is quite useful for computational purposes.
The action of the corresponding weight $\tfrac{3}{2}$ Hecke
operators on  $\frac{d}{dz} \eta(z)^{-1}$ can be given in terms of the
same polynomials. See Theorem \thm{dHecke} below. To finish the proof
of the theorem we need to make a careful
study of the action of these operators modulo 
$5$, $7$, $13$, $27$ and $32$.

For $\ell\ge5$ prime we define
\beq
Z_{\ell}(z) = \sum_{n=-s_\ell}^\infty \left(\ell^3 \, p(\ell^2 n - s_\ell)
+ \ell \chi_{12}(\ell) \leg{1-24n}{\ell} p(n)
+  \,p\left( \frac{n + s_\ell}{\ell^2} \right) \right) q^{n-\frac{1}{24}}.
\mylabel{eq:Zell}
\eeq
\begin{proposition}[Atkin \cite{At68b}]
\mylabel{propo:AtkinLemma1}
The function $Z_\ell(z) \, \eta(z)$ is a modular function
on the full modular group $\Gamma(1)$.
\end{proposition}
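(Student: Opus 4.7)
The plan is to recognize $Z_\ell(z)$ as a constant multiple of the Hecke image $\eta(z)^{-1}\vert T(\ell^2)$, where $T(\ell^2)$ is the weight $-\tfrac{1}{2}$ Hecke operator acting on $\eta(z)^{-1}$ viewed as a weakly holomorphic form on $\Gamma(1)$ with the eta-multiplier, and then to invoke Atkin's multiplier-preservation theorem (as summarized in the paragraph preceding the proposition). Starting from $\eta(z)^{-1}=\sum_{m\ge 0} p(m)\,q^{m-1/24}$, the half-integral weight Hecke formula (with $\lambda=-1$) gives for the coefficient of $q^{n-1/24}$ in $\eta(z)^{-1}\vert T(\ell^2)$ the expression
\beqs
p(\ell^2 n - s_\ell) + \chi_{12}(\ell)\,\leg{1-24n}{\ell}\,\ell^{-2}\,p(n) + \ell^{-3}\,p\!\left(\tfrac{n+s_\ell}{\ell^2}\right),
\eeqs
which can be verified by passing to $\eta(24z)^{-1}=\sum p(m)\,q^{24m-1}$ on $\Gamma_0(576)$ with Nebentypus $\chi_{12}$ and applying Shimura's formula: the first summand comes from $\ell^2(24n-1)=24(\ell^2 n - s_\ell)-1$, the Kronecker symbol $\leg{1-24n}{\ell}$ arises as $\leg{-(24n-1)}{\ell}$ in the Shimura formula (with $\lambda=-1$), and the third summand accounts for the indices with $(24n-1)/\ell^2 = 24\bigl((n+s_\ell)/\ell^2\bigr)-1$.

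Multiplying through by $\ell^3$ matches \eqn{Zell} term-by-term, so $Z_\ell(z) = \ell^3\,\eta(z)^{-1}\vert T(\ell^2)$. Atkin's theorem then asserts that this Hecke image carries the same eta-multiplier system on $\Gamma(1)$ as $\eta(z)^{-1}$ itself, so multiplication by $\eta(z)$ cancels the multiplier and
\beqs
Z_\ell(z)\,\eta(z) \;=\; \ell^3\,\eta(z)\cdot\bigl(\eta(z)^{-1}\vert T(\ell^2)\bigr)
\eeqs
transforms as a weight-zero function invariant under $\Gamma(1)$. Since its $q$-expansion is a Laurent series in $q$, it is meromorphic at the unique cusp, and so it is a modular function on $\Gamma(1)$ as claimed.

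The main obstacle is purely bookkeeping: fixing a consistent form of the weight $-\tfrac{1}{2}$ Hecke operator (either Shimura's on $\Gamma_0(576)$ with Nebentypus $\chi_{12}$, or the equivalent formulation on $\Gamma(1)$ with the eta-multiplier) and tracking the exponent shift $\ell^2(n-\tfrac{1}{24}) = (\ell^2 n - s_\ell) - \tfrac{1}{24}$ along with the $\ell^{\lambda-1}$ and $\ell^{2\lambda-1}$ factors, so that after multiplying by $\ell^3$ every summand of \eqn{Zell} is accounted for with the correct sign and character. Once the identification $Z_\ell = \ell^3\,\eta^{-1}\vert T(\ell^2)$ is established, Atkin's multiplier-invariance theorem does the essential conceptual work; by the valence formula the function $Z_\ell(z)\,\eta(z)$ is more specifically a polynomial in the $j$-invariant, the stronger computational form that the paper next recalls from Ono \cite{On10b}.
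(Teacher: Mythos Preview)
Your proposal is correct and follows exactly the approach the paper itself summarizes (and attributes to Atkin) in the paragraph preceding the proposition: identify $Z_\ell(z)$ with $\ell^3\,\eta(z)^{-1}\vert T(\ell^2)$ via the weight $-\tfrac12$ Hecke formula, then use Atkin's multiplier-preservation result so that multiplication by $\eta(z)$ cancels the eta-multiplier and yields a modular function on $\Gamma(1)$. The paper does not supply any further proof beyond this summary and the citation to \cite{At68b}, so your write-up is effectively an expanded version of what the paper records.
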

It follows that $Z_\ell(z) \, \eta(z)$ is a polynomial in $j(z)$,
where $j(z)$ is Klein's modular invariant
\beq
j(z) := \frac{E_4(z)^2}{\Delta(z)} = q^{-1} + 744 + 196884 q + \cdots,
\mylabel{eq:jdef}
\eeq
$E_2(z)$, $E_4(z)$, $E_6(z)$ are the usual Eisenstein series
\beq
E_2(z) := 1 - 24 \sum_{n=1}^\infty \sigma_1(n) q^n, \qquad 
E_4(z) := 1 + 240 \sum_{n=1}^\infty \sigma_3(n) q^n, \qquad 
E_6(z) := 1 - 504 \sum_{n=1}^\infty \sigma_5(n) q^n, 
\mylabel{eq:E246def}
\eeq
$\sigma_k(n) = \sum_{d\mid n} q^k$, and $\Delta(z)$ is Ramanujan's
function
\beq
\Delta(z) := \eta(z)^{24} = q \prod_{n=1}^\infty (1 - q^n)^{24}.
\mylabel{eq:Deltadef}
\eeq
In a recent paper, Ono \cite{On10b} has found a nice formula
for this polynomial. We define
\beq
E(q) := \prod_{n=1}^\infty (1 - q^n) = q^{-\frac{1}{24}} \eta(z),
\mylabel{eq:Eqdef}
\eeq
and a sequence of polynomials $A_m(x)\in\Z[x]$ by
\begin{align}
\sum_{m=0}^\infty A_m(x) q^m &= E(q) \, \frac{E_4(z)^2 E_6(z)}{\Delta(z)}
                                    \, \frac{1}{j(z) - x}
\mylabel{eq:Amxdef}\\
&= 1 + (x-745) q + (x^2 - 1489 x + 160511) q^2 + \cdots.
\nonumber
\end{align}
\begin{theorem}[Ono \cite{On10b}]
\mylabel{thm:OnoHeckeThm}
For $\ell\ge5$ prime
\beq
Z_{\ell}(z) \, \eta(z) = \ell \, \chi_{12}(\ell) + A_{s_\ell}(j(z)),
\mylabel{eq:OnoPolys}
\eeq
where $Z_\ell(z)$ is given in \eqn{Zell}, and $s_\ell$ is given 
in \eqn{selldef}.
\end{theorem}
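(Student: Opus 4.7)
The plan is to leverage Proposition \propo{AtkinLemma1} to conclude $Z_\ell(z)\eta(z)$ is a polynomial in $j(z)$, then explicitly identify this polynomial via a Faber polynomial expansion. By Proposition \propo{AtkinLemma1}, $Z_\ell(z)\eta(z)$ is a modular function on $\Gamma(1)$, holomorphic on $\mathbb{H}$ with at worst a pole at the cusp $i\infty$. Examining the lowest-order term of the product $Z_\ell(z)\cdot\eta(z)$ using \eqn{Zell} shows the pole at $i\infty$ has order exactly $s_\ell$ with leading coefficient $1$, so $Z_\ell(z)\eta(z)$ is a polynomial in $j(z)$ of degree $s_\ell$ with leading coefficient $1$.

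To pin down the lower-order coefficients I would invoke the Faber polynomials $J_n(x)\in\Z[x]$, defined as the unique polynomials for which $J_n(j(z))$ has Fourier expansion $q^{-n}+O(q)$ with vanishing constant term (so $J_0=1$, $J_1=x-744$, $J_2=x^2-1488x+159768$, and so on). A standard argument shows that any holomorphic modular function $f$ on $\Gamma(1)$ with Fourier expansion $f=\sum_{n\geq -N}c_nq^n$ admits the unique expansion $f(z)=c_0+\sum_{n=1}^{N}c_{-n}J_n(j(z))$, since the difference is a holomorphic modular function vanishing at the cusp and hence identically zero. Applied to $f=Z_\ell(z)\eta(z)$ with $N=s_\ell$, this reduces the theorem to computing the $s_\ell+1$ Fourier coefficients of $Z_\ell(z)\eta(z)$ of order $-s_\ell$ through $0$.

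To read off what the right-hand side of \eqn{OnoPolys} contributes in the same basis, I would use the classical Asai-Kaneko-Ninomiya identity
\[
\sum_{n\geq 0}J_n(x)\,q^n \;=\; \frac{E_4(z)^2E_6(z)}{\Delta(z)(j(z)-x)}.
\]
Combined with the defining formula \eqn{Amxdef}, this yields $A_m(x)=\sum_{n=0}^{m}e_{m-n}J_n(x)$, where $e_k:=[q^k]E(q)$ is the $k$-th Euler pentagonal coefficient. Consequently $\ell\chi_{12}(\ell)+A_{s_\ell}(j(z))$ has Faber coefficient $e_{s_\ell-n}$ in front of $J_n(j(z))$ for $1\leq n\leq s_\ell$ and constant term $\ell\chi_{12}(\ell)+e_{s_\ell}$.

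The final step is to compute the Fourier coefficients of $Z_\ell(z)\eta(z)$ in the range $-s_\ell\leq n\leq 0$ and verify the match. Writing $d_m$ for the coefficient of $q^{m-1/24}$ in $Z_\ell(z)$ and $c_n$ for the coefficient of $q^n$ in $Z_\ell(z)\eta(z)$, we have the convolution $c_n=\sum_{k\geq 0}d_{n-k}e_k$. Each of the three summands in \eqn{Zell} defining $d_m$ vanishes unless its argument is a non-negative integer. Using the elementary bounds $0<s_\ell<\ell^2$, a short case analysis shows that $d_m=0$ for $-s_\ell<m<0$, while $d_{-s_\ell}=p(0)=1$ (from the third summand) and $d_0=\ell\chi_{12}(\ell)\,p(0)=\ell\chi_{12}(\ell)$ (from the second summand, using $\leg{1}{\ell}=1$). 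The convolution then collapses to $c_{-n}=e_{s_\ell-n}$ for $1\leq n\leq s_\ell$ and $c_0=\ell\chi_{12}(\ell)+e_{s_\ell}$, matching the target expansion term by term. The only delicate point is the case analysis showing $d_m=0$ in the middle range; once that is settled, the remainder of the proof is a formal manipulation of generating functions.
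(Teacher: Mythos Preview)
The paper does not give its own proof of this statement: Theorem \thm{OnoHeckeThm} is quoted from Ono \cite{On10b} without argument, and the paper only \emph{uses} the result (via the polynomials $C_\ell(x)=\ell\chi_{12}(\ell)+A_{s_\ell}(x)$) in the proof of Theorem \thm{dHecke} and what follows. So there is no in-paper proof to compare against.

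That said, your argument is correct and is essentially the standard one. Proposition \propo{AtkinLemma1} gives that $Z_\ell(z)\eta(z)\in\Z[j(z)]$, and your computation of $d_m$ for $-s_\ell\le m\le 0$ is right: the first summand in \eqn{Zell} never contributes since $\ell^2 m-s_\ell<0$; the second contributes only at $m=0$ giving $\ell\chi_{12}(\ell)$; the third contributes only at $m=-s_\ell$ giving $1$, because $0\le m+s_\ell\le s_\ell<\ell^2$ forces $m+s_\ell=0$. The Faber expansion $f=c_0+\sum_{n=1}^{N}c_{-n}J_n(j)$ is the right tool, and comparing \eqn{Amxdef} with the Asai--Kaneko--Ninomiya generating function $\sum_{n\ge 0}J_n(x)q^n=E_4^2E_6/(\Delta\,(j-x))$ gives $A_{s_\ell}(x)=\sum_{n=0}^{s_\ell}e_{s_\ell-n}J_n(x)$ with $e_k=[q^k]E(q)$, after which the match with your convolution $c_{-n}=e_{s_\ell-n}$ (for $1\le n\le s_\ell$) and $c_0=\ell\chi_{12}(\ell)+e_{s_\ell}$ is immediate. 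Nothing is missing; the ``delicate'' case analysis you flag is exactly the three-line check above.
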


We define a sequence of polynomials $C_\ell(x)\in\Z[x]$ by
\begin{align}
C_\ell(x) & := \ell \, \chi_{12}(\ell) + A_{s_\ell}(x),
\mylabel{eq:Celldef}\\
&= \sum_{n=0}^{s_\ell} c_{n,\ell} x^n,
\nonumber
\end{align}
so that
\beq
Z_{\ell}(z) \, \eta(z) = C_\ell(j(z)).
\mylabel{eq:ZellC}
\eeq
We define
\beq
d(n) := (24n -1)\,p(n),
\mylabel{eq:ddef}
\eeq
so
that
\beq
\sum_{n=0}^\infty d(n) q^{24n-1} = q \frac{d}{dq} \frac{1}{\eta(24z)}
= - \frac{E_2(24z)}{\eta(24z)},
\mylabel{eq:dgen}
\eeq
and
\beq
\mathbf{a}(n) = 12 \spt(n)  + d(n).
\mylabel{eq:asd}
\eeq
For $\ell\ge5$ prime we define
\beq
\Xi_{\ell}(z) = \sum_{n=-s_\ell}^\infty \left(d(\ell^2 n - s_\ell)
+ \chi_{12}(\ell) \left(\leg{1-24n}{\ell}-1-\ell\right) d(n)
+  \ell\,d\left( \frac{n + s_\ell}{\ell^2} \right) \right) q^{n-\frac{1}{24}}.
\mylabel{eq:Xidef}
\eeq
We then
have the following analogue of Theorem \thm{OnoHeckeThm}.
\begin{theorem}
\mylabel{thm:dHecke}
For $\ell\ge5$ prime we have
\begin{align}    
\ell \, \Xi_\ell(z) \, \eta(z) \, \Delta(z)^{s_\ell} 
&=  -\sum_{n=0}^{s_\ell} c_{n,\ell} \, E_4(z)^{3n-1} 
               \, \Delta(z)^{s_\ell - n}
               \left( 24n E_6(z) + E_4(z) E_2(z) \right)
\mylabel{eq:Xiid}\\
& \qquad + \chi_{12}(\ell) \ell (1 + \ell) E_2(z) \, \Delta(z)^{s_\ell},
\nonumber
\end{align}
where the coefficients $ c_{n,\ell}$ are defined by \eqn{Amxdef} and \eqn{Celldef}.
\end{theorem}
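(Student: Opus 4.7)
The plan is to derive \eqn{Xiid} by expressing $\Xi_\ell(z)$ in terms of $q\frac{d}{dq} Z_\ell(z)$ and then invoking Ono's Theorem \thm{OnoHeckeThm}. The key arithmetic observation is that the index shifts appearing in \eqn{Zell} and \eqn{Xidef} interact cleanly with the factor $(24n-1)$ in the definition of $d(n)$: one has $24(\ell^2 n - s_\ell) - 1 = \ell^2(24n-1)$ and $24((n+s_\ell)/\ell^2) - 1 = (24n-1)/\ell^2$. Combining these with $d(m) = (24m-1)p(m)$ and comparing Fourier coefficients termwise yields
\beqs
\ell\, \Xi_\ell(z) = 24\, q\frac{d}{dq}\, Z_\ell(z) - \chi_{12}(\ell)\,\ell(1+\ell)\sum_{n\ge 0} d(n)\, q^{n - \frac{1}{24}}.
\eeqs
By \eqn{dgen} the last sum equals $-E_2(z)/\eta(z)$, so after multiplying by $\eta(z)$ we obtain
\beqs
\ell\, \Xi_\ell(z)\,\eta(z) = 24\,\eta(z)\, q\frac{d}{dq}\, Z_\ell(z) + \chi_{12}(\ell)\,\ell(1+\ell)\, E_2(z).
\eeqs

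Next I would apply Theorem \thm{OnoHeckeThm} in the form $Z_\ell(z) = C_\ell(j(z))/\eta(z)$ and compute the right-hand side via the product rule, using the standard identities $q\frac{d}{dq}\eta(z) = \tfrac{1}{24} E_2(z)\,\eta(z)$ and $q\frac{d}{dq} j(z) = -E_4(z)^2 E_6(z)/\Delta(z)$. The latter follows directly from Ramanujan's differential relations for $E_k$ together with $q\frac{d}{dq}\Delta = E_2\,\Delta$. This simplifies to
\beqs
24\,\eta(z)\, q\frac{d}{dq}\,Z_\ell(z) = -24\,C_\ell'(j(z))\,\frac{E_4(z)^2 E_6(z)}{\Delta(z)} - C_\ell(j(z))\,E_2(z).
\eeqs

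To finish, I would multiply through by $\Delta(z)^{s_\ell}$, expand $C_\ell(j) = \sum_{n=0}^{s_\ell} c_{n,\ell}\, j^n$, and use $j = E_4^3/\Delta$ so that $j^n\Delta^{s_\ell} = E_4^{3n}\Delta^{s_\ell - n}$ and $j^{n-1}\Delta^{s_\ell - 1} = E_4^{3(n-1)}\Delta^{s_\ell - n}$. Collecting the derivative and non-derivative contributions then produces, as a single sum, the factor $24n\, E_6 + E_4\, E_2$ weighted by $c_{n,\ell}\, E_4^{3n-1}\,\Delta^{s_\ell - n}$, which is precisely \eqn{Xiid}. The only mild subtlety is the $n=0$ term, where the formal $E_4^{-1}$ is cancelled by $E_4 E_2$ because the $C_\ell'$-contribution vanishes at $n=0$; beyond this, the main obstacle is purely bookkeeping the exponents of $E_4$ and $\Delta$ while matching the product rule output against Ono's polynomial identity.
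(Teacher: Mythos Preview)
Your proposal is correct and follows essentially the same route as the paper: the paper differentiates the identity $Z_\ell(z)\,\eta(z)=C_\ell(j(z))$ (via the substitution $z\mapsto 24z$, applying $q\tfrac{d}{dq}$, and substituting back) to obtain exactly your intermediate formula $\ell\,\Xi_\ell\,\eta = 24\,C_\ell'(j)\,q\tfrac{d}{dq}j + (\chi_{12}(\ell)\ell(1+\ell)-C_\ell(j))\,E_2$, and then finishes with the same identities $j\Delta=E_4^3$, $q\tfrac{d}{dq}\Delta=E_2\Delta$, $q\tfrac{d}{dq}j\cdot\Delta=-E_4^2E_6$. Your explicit coefficient comparison using $24(\ell^2 n-s_\ell)-1=\ell^2(24n-1)$ is just an unpacking of the paper's substitution trick, so the two arguments coincide.
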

\begin{proof}
Suppose $\ell\ge5$ is prime. In equation \eqn{ZellC} we replace
$z$ by $24z$, apply the operator $q\frac{d}{dq}$ and
replace $z$ by $\tfrac{1}{24}z$ to obtain
\beq
\ell \, \Xi_\ell(z) \, \eta(z) 
= 24 C_\ell'(j(z)) \, q\frac{d}{dq} (j(z)) + 
(\chi_{12}(\ell) \ell (1 + \ell) - C_{\ell}(j(z)) \, E_2(z) 
\mylabel{eq:Xiid1}
\eeq
The result then follows easily from the identities
\beq
j(z) \, \Delta(z) = E_4(z)^3, \qquad
 q\frac{d}{dq} (\Delta(z)) = \Delta(z) \, E_2(z),\quad  \mbox{and}\qquad
 q\frac{d}{dq} (j(z)) \, \Delta(z) = - E_4(z)^2 E_6(z),
\mylabel{eq:jids}
\eeq
which we leave as an easy exercise.
\end{proof}

We are now ready to prove Theorem \thm{mainthm}.
A standard calculation gives the following congruences.
\beq
E_4(z)^3 - 720 \, \Delta(z) \equiv 1 \pmod{65520},\quad\mbox{and}\qquad
E_2(z) \equiv E_4(z)^2 E_6(z) 
\pmod{65520}.      
\mylabel{eq:E246mod65520}
\eeq

We now use \eqn{E246mod65520} to reduce \eqn{Xiid1} modulo
$65520$.
\begin{align}
&\ell \, \Xi_\ell(z) \, \eta(z) \, \Delta(z)^{s_\ell} 
\mylabel{eq:Xiidcong1}\\
& \equiv  -\sum_{n=0}^{s_\ell} c_{n,\ell} \, E_4(z)^{3n-1} 
               \, \Delta(z)^{s_\ell - n}
         \left( 24n E_6(z)(E_4(z)^3 - 720 \, \Delta(z)) + E_4(z)^3 E_6(z) \right)
\nonumber\\
& \qquad\qquad + \chi_{12}(\ell) \ell(1 + \ell) \, E_4(z)^2 E_6(z)  
                                                \, \Delta(z)^{s_\ell}
\pmod{65520}      
\nonumber\\
& \equiv 
-\sum_{n=0}^{s_\ell} (24n+1) c_{n,\ell} \, E_4(z)^{3n+2} \, E_6(z)
               \, \Delta(z)^{s_\ell - n}
\nonumber\\
&\qquad +\sum_{n=0}^{s_\ell} 720 \cdot 24n c_{n,\ell} \, E_4(z)^{3n-1} \, E_6(z)
               \, \Delta(z)^{s_\ell - n+1}
+ \chi_{12}(\ell) \ell(1 + \ell) \,  E_4(z)^2 E_6(z) \, \Delta(z)^{s_\ell}
\pmod{65520}      
\nonumber\\
& \equiv 
\left(720\, c_{1,\ell} -  c_{0,\ell} + \chi_{12}(\ell) \ell(1 + \ell)\right)
 \, E_4(z)^{2} \, E_6(z) \, \Delta(z)^{s_\ell}
\nonumber\\
&\qquad +
\sum_{n=1}^{s_\ell-1} \left(720\cdot 24(n+1) c_{n+1,\ell} - (24n+1) c_{n,\ell}\right)
             \, E_4(z)^{3n+2} \, E_6(z) \, \Delta(z)^{s_\ell - n}
\nonumber\\
& \qquad - (24s_\ell+1) c_{s_\ell}  E_4(z)^{3s_\ell+2} \, E_6(z)
\pmod{65520}.
\nonumber 
\end{align}
We define
\beq
\mathcal{A}_{\ell}(z) := \sum_{n=-s_\ell}^\infty \left(\mathbf{a}(\ell^2 n - s_\ell)
+ \chi_{12}(\ell)  \left(\leg{1-24n}{\ell}-1-\ell\right)\mathbf{a}(n)
+  \ell\,\mathbf{a}\left( \frac{n + s_\ell}{\ell^2} \right) \right) q^{n-\frac{1}{24}}
\mylabel{eq:Aelldef}
\eeq
and
\beq
\mathcal{S}_{\ell}(z) := \sum_{n=1}^\infty \left(\spt(\ell^2 n - s_\ell)
+ \chi_{12}(\ell)  \left(\leg{1-24n}{\ell}-1-\ell\right)\spt(n)
+  \ell\,\spt\left( \frac{n + s_\ell}{\ell^2} \right) \right) q^{n-\frac{1}{24}},
\mylabel{eq:Selldef}
\eeq
so that
\beq
\mathcal{A}_{\ell}(z) = 12 \, \mathcal{S}_{\ell}(z) 
+ \Xi_{\ell}(z) = \mathcal{M}_\ell(z/24).
\mylabel{eq:ASXi}                          
\eeq
By Theorem \thm{Othm} and equation \eqn{Melldef} we see that the function
\beq
\ell \mathcal{A}_{\ell}(z) \, \eta(z) \, \Delta(z)^{s_\ell} \in 
M_{\tfrac{1}{2}(\ell^2+3)}(\Gamma(1)),
\eeq
the space of entire modular forms of weight $\tfrac{1}{2}(\ell^2+3)$ on
$\Gamma(1)$. Since $\tfrac{1}{2}(\ell^2+3)= 2 + 12 s_\ell$ the set
\beq
\{
E_4(z)^{3n-1} \, E_6(z) \, \Delta(z)^{s_\ell - n} \,:\, 1 \le n \le s_\ell
\}
\eeq
is a basis. Hence there are integers $b_{n,\ell}$ ($ 1 \le n \le s_\ell$)
such that
\beq
\mathcal{A}_{\ell}(z) \, \eta(z) \, \Delta(z)^{s_\ell} 
= \sum_{n=1}^{s_\ell} b_{n,\ell} E_4(z)^{3n-1} \, E_6(z) \, \Delta(z)^{s_\ell - n}.
\mylabel{eq:Aellid}
\eeq
Using \eqn{E246mod65520} we find that
\begin{align}
\mathcal{A}_{\ell}(z) \, \eta(z) \, \Delta(z)^{s_\ell} 
&\equiv 
 - 720 b_{1,\ell} \, E_4(z)^{2} \, E_6(z) \, \Delta(z)^{s_\ell}
\mylabel{eq:Aellidmod65520}\\
&\qquad +
\sum_{n=1}^{s_\ell-1} (b_{n,\ell} - 720 b_{n+1,\ell}) 
             \, E_4(z)^{3n+2} \,  E_6(z) \, \Delta(z)^{s_\ell - n}
\nonumber\\
& \qquad + b_{s_\ell,\ell}  E_4(z)^{3s_\ell+2} \, E_6(z)
\pmod{65520}.
\nonumber 
\end{align}
By \eqn{Xiidcong1}, \eqn{ASXi} and \eqn{Aellid} we deduce that
there are integers $a_{n,\ell}$ ($0 \le n \le s_\ell$) such that
\beq
12 \, \ell \, \mathcal{S}_{\ell}(z) \, \eta(z) \, \Delta(z)^{s_\ell} 
\equiv 
\sum_{n=0}^{s_\ell} 
  a_{n,\ell} \, E_4(z)^{3n+2} \, E_6(z) \, \Delta(z)^{s_\ell - n}
\pmod{65520}.
\eeq
It follows that
\beq
12 \, \ell \, \mathcal{S}_{\ell}(z) \equiv 0 \pmod{65520},
\mylabel{eq:nicecong1}
\eeq
since
\begin{align}
 \ord_{i\infty}\left(12 \, \ell \, \mathcal{S}_{\ell}(z) \, \eta(z) \, \Delta(z)^{s_\ell}\right) &= s_{\ell} + 1,
\mylabel{eq:ords}\\
0 \le \ord_{i\infty}\left(E_4(z)^{3n+2} \, E_6(z) \, \Delta(z)^{s_\ell - n}\right)
   &\le s_\ell,
\nonumber\\
  E_4(z)^{3n+2} \, E_6(z) \, \Delta(z)^{s_\ell - n} &= q^{s_\ell -n} + \cdots,
\nonumber
\end{align}
for $0 \le n \le s_\ell$ and all functions have integral coefficients.
Since $65550 = 2^4 \cdot 3^2 \cdot 5 \cdot 7 \cdot 13$, the congruence
\eqn{nicecong1} implies Part (ii) of Theorem \thm{mainthm}.
To prove Part (i) we need to work a little harder.
We note that the congruence \eqn{nicecong1} does imply   
\beq
\mathcal{S}_{\ell}(z) \equiv 0 \pmod{12}.
\mylabel{eq:nicecong1a}
\eeq
We need to show this congruence actually holds modulo $72$.

        First we show the congruence holds modulo $8$ by studying
$\Xi_\ell(z)$ modulo $32$. We need the congruences,
\beq
E_2(z) \equiv E_4(z) \, E_6(z) + 16 \Delta(z) \pmod{32},\quad\mbox{and}\qquad
E_4(z)^2 \equiv 1 \pmod{32},                             
\mylabel{eq:E24mod32}
\eeq
which are routine to prove. We proceed as in the proof of \eqn{Xiidcong1} to
find that
\begin{align}
&\ell \, \Xi_\ell(z) \, \eta(z) \, \Delta(z)^{s_\ell} 
\mylabel{eq:Xiidcong2}\\
& \equiv 
\left(\chi_{12}(\ell) \ell(1 + \ell)  - c_{0,\ell} - 16 c_{1,\ell}\right)
 \, E_2(z)\, \Delta(z)^{s_\ell}
\nonumber\\
&\qquad -
\sum_{n=1}^{s_\ell-1} \left((24n+1) c_{n,\ell} + 16  c_{n+1,\ell}\right)
             \, E_4(z)^{3n-1} \, E_6(z) \, \Delta(z)^{s_\ell - n}
\nonumber\\
& \qquad - (24s_\ell+1) c_{s_\ell}  E_4(z)^{3s_\ell-1} \, E_6(z)
\pmod{32}.
\nonumber 
\end{align}
By \eqn{Xiidcong2}, \eqn{ASXi} and \eqn{Aellid} we deduce that
there are integers $a_{n,\ell}'$ ($0 \le n \le s_\ell$) such that
\beq
12 \, \ell \,  \mathcal{S}_{\ell}(z) \,  \eta(z) \,  \Delta(z)^{s_\ell} 
\equiv 
\sum_{n=1}^{s_\ell} 
  a_{n,\ell}' \,  E_4(z)^{3n-1} \,  E_6(z) \,  \Delta(z)^{s_\ell - n}
+ a_{0,\ell}' \,  E_2(z) \,  \Delta(z)^{s_\ell}
\pmod{32}.
\eeq
Arguing as before,
it follows that
\beq
12\mathcal{S}_{\ell}(z) \equiv 0 \pmod{32},\quad\mbox{and}\qquad
\mathcal{S}_{\ell}(z) \equiv 0 \pmod{8}.
\mylabel{eq:nicecong2}
\eeq

To complete the proof, we need to study $\Xi_{\ell}(z)$ modulo $27$.
We need the congruences,
\beq
E_2(z) \equiv E_4(z)^5  + 18 \Delta(z) \pmod{27}\quad\mbox{and}\qquad
E_6(z) \equiv E_4(z)^6 \pmod{27},                             
\mylabel{eq:E246mod27}
\eeq
which are routine to prove. We proceed as in the proof of \eqn{Xiidcong1} and
\eqn{Xiidcong2} to find
that
\begin{align}
&\ell \,  \Xi_\ell(z) \,  \eta(z) \,  \Delta(z)^{s_\ell} 
\mylabel{eq:Xiidcong3}\\
& \equiv 
\left(\chi_{12}(\ell) \ell(1 + \ell)  - c_{0,\ell} - 18 c_{1,\ell}\right)
 \,  E_2(z)\,  \Delta(z)^{s_\ell}
\nonumber\\
&\qquad -
\sum_{n=1}^{s_\ell-1} \left((24n+1) c_{n,\ell} + 18  c_{n+1,\ell}\right)
             \,  E_4(z)^{3n-1} \,  E_6(z) \,  \Delta(z)^{s_\ell - n}
\nonumber\\
& \qquad - (24s_\ell+1) c_{s_\ell}  E_4(z)^{3s_\ell-1} \,  E_6(z)
\pmod{27}.
\nonumber 
\end{align}
By \eqn{Xiidcong3}, \eqn{ASXi} and \eqn{Aellid} we deduce that
there are integers $a_{n,\ell}''$ ($0 \le n \le s_\ell$) such that
\beq
12 \,  \ell \,  \mathcal{S}_{\ell}(z) \,  \eta(z) \,  \Delta(z)^{s_\ell} 
\equiv 
\sum_{n=1}^{s_\ell} 
  a_{n,\ell}''\,  E_4(z)^{3n-1} \,  E_6(z) \,  \Delta(z)^{s_\ell - n}
+ a_{0,\ell}'' \,  E_2(z) \,  \Delta(z)^{s_\ell}
\pmod{27}.
\eeq
Arguing as before,
it follows that
\beq
12\mathcal{S}_{\ell}(z) \equiv 0 \pmod{27},\quad\mbox{and}\qquad
\mathcal{S}_{\ell}(z) \equiv 0 \pmod{9}.
\mylabel{eq:nicecong3}
\eeq
The congruences \eqn{nicecong2} and \eqn{nicecong3} give
\eqn{sptmod72} and this completes the proof of Theorem \thm{mainthm}.

\section{Proof of Theorem \thm{aAtkincong}} \mylabel{sec:proofthmaAtkincong}

In this section we prove Theorem \thm{aAtkincong}. 
Atkin \cite{At68b} proved Theorem \thm{Atkin} by constructing
certain special modular functions on  $\Gamma_0(t)$ and $\Gamma_0(t^2)$
for $t=5$, $7$ and $13$. We attack the problem by extending
Atkin's results to the corresponding weight $2$ case.

Let $GL_2^{+}(\R)$ denote the group of all real $2\times2$ matrices with
positive determinant. $GL_2^{+}(\R)$ acts on the complex upper half plane
$\mathcal{H}$ by linear fractional transformations. 
We define the slash operator for modular forms of
integer weight. Let $k\in\Z$.
For a function $f\,:\,\Hup\longrightarrow\C$
and $L=\begin{pmatrix} a & b \\ c & d \end{pmatrix}\in GL_2^{+}(\R)$
we define
\beq
f(z)\,\mid_{k}\,L = 
f\,\mid_{k}\,L = f\,\mid\,L  =
(\det L)^{\tfrac{k}{2}} (cz + d)^{-k} f(L z).
\mylabel{eq:slashdef}
\eeq
Let $\Gamma'\subset \Gamma(1)$ (a subgroup of finite index).  We 
say $f(z)$ is a \textit{weakly holomorphic modular form} of weight $k$ on 
$\Gamma'$
if $f(z)$ is holomorphic on the upper half plane $\mathcal{H}$,
$f(z)\,\mid_{k}\,L = f(z)$ for all $L$ in $\Gamma'$, and $f(z)$ has
at most polar singularities in the local variables at the cusps
of the fundamental region of $\Gamma'$.
We say $f(z)$ is a \textit{weakly holomorphic modular function} if it
is a weakly holomorphic modular form of weight $0$.
We say $f(z)$ is an \textit{entire modular form} of weight $k$ on $\Gamma'$
if it is  a \textit{weakly holomorphic modular form} that is holomorphic
at the cusps of the fundamental region of $\Gamma'$.
We denote the space of entire modular forms of weight $k$ on
$\Gamma'$ by $M_k(\Gamma')$.

Suppose that $t\ge 5$ is prime. We need 
\begin{gather*}
W_t = W = \begin{pmatrix} 0 & -1 \\ t & 0 \end{pmatrix},\quad
R = \begin{pmatrix} 1 & 0 \\ -1 & 1 \end{pmatrix},\quad
V_a = \begin{pmatrix} a & \lambda \\ t & a' \end{pmatrix},\quad
B_t = \begin{pmatrix} t & 0 \\ 0 & 1 \end{pmatrix},\quad\\
T_{b,t} = \begin{pmatrix} 1 & b \\ 0 & 1 \end{pmatrix},\quad
Q_{b,t} = \begin{pmatrix} 1/t & b/t \\ 0 & 1 \end{pmatrix},
\end{gather*}
where for $1 \le a \le t-1$, $a'$ is uniquely defined by
$1 \le a' \le t-1$, and $a'a - \lambda t=1$.        
We have
\begin{align}
  B_t \, R^{at} &= W_t \, V_a \, T_{-a'/t}
\mylabel{eq:matrels1}\\
  R^{at} \, W_t &= W_{t^2} \, Q_{a,t}.
\mylabel{eq:matrels2}
\end{align}

We define
\beq
\Phi_t(z) = \Phi(z) := \frac{\eta(z)}{\eta(t^2z)}.
\mylabel{eq:Phidef}
\eeq
Then $\Phi_t(z)$ is a modular function of $\Gamma_0(t)$,
\beq
\Phi_t(z) \,\mid\, W_{t^2} =  t\,\Phi_t(z)^{-1}
\qquad\mbox{(\cite[(24)]{At68b})},
\mylabel{eq:PhitransW}
\eeq
and
\beq
\Phi_t(z) \,\mid\, R^{at}  
=  \sqrt{t\,}\,e^{\pi i(t-1)/4} \, e^{-\pi i a't/12}
   \, \leg{a'}{t} \frac{\eta(z)}{\eta(z - a'/t)}
\qquad\mbox{(\cite[(25)]{At68b})}.
\mylabel{eq:PhitransRat}
\eeq
Although $E_2(z)$ is not a modular form, it well-known that
\beq
\mathcal{E}_{2,t}(z) := \frac{1}{t-1}\left(t\,E_2(tz) - E_2(z)\right),
\mylabel{eq:E2tdef}
\eeq
is an entire modular form of weight $2$ on $\Gamma_0(t)$ and
\beq
\mathcal{E}_{2,t}(z) \mid W_t = - \mathcal{E}_{2,t}(z).
\mylabel{eq:E2tWtrans}
\eeq
\begin{proposition}
\mylabel{propo:wt2AtkinLemma1}
Suppose $t\ge5$ is prime, $K(z)$ is a weakly holomorphic modular function
on $\Gamma_0(t)$, and 
\beq
S(z) = \mathcal{E}_{2,t}(tz) \, K^{*}(tz) \frac{\eta(z)}{\eta(t^2z)}
 - \chi_{12}(t)\,\eta(z)\,\sum_{n=m}^\infty \leg{1-24n}{t}\,\beta_t(n) 
                               q^{n-\frac{1}{24}},
\mylabel{eq:Sdef}
\eeq
where
\beq
\mathcal{E}_{2,t}(z) \, \frac{K(z)}{\eta(z)} 
   = \sum_{n=m}^\infty \beta_t(n) q^{n-\frac{1}{24}},
\mylabel{eq:betadef}
\eeq
and
\beq
K^{*}(z) = K(z) \stroke W_t.
\mylabel{eq:Ksdef}
\eeq
Then $S(z)$ is a weakly holomorphic modular form of weight $2$
on $\Gamma_0(t)$.
\end{proposition}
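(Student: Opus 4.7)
The plan is to follow the structure of Atkin's original weight-$0$ result (Proposition \propo{AtkinLemma1}), treating the Eisenstein-type factor $\mathcal{E}_{2,t}$ as supplying the weight-$2$ piece. First, observe that $F(z) := \mathcal{E}_{2,t}(z)\,K(z)$ is a weakly holomorphic modular form of weight $2$ on $\Gamma_0(t)$, since $\mathcal{E}_{2,t}(z)\in M_2(\Gamma_0(t))$ by construction and $K(z)$ is a weakly holomorphic modular function on $\Gamma_0(t)$. Thus $F(z)/\eta(z)=\sum\beta_t(n)q^{n-1/24}$ is a weight-$\tfrac32$ object carrying the inverse of the standard $\eta$-multiplier on $\Gamma_0(t)$, and the twisted series $\sum \leg{1-24n}{t}\beta_t(n)q^{n-1/24}$ should be interpreted as the Shimura-type quadratic-character projection of $F/\eta$.

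To prove the proposition, I would verify that $S(z)\,\mid_2\,\gamma=S(z)$ for every $\gamma$ in a generating set of $\Gamma_0(t)$. Invariance under the translation $T_{1,t}$ is immediate: after absorbing the $q^{-1/24}$ shifts into $\eta(z)$ and into $\Phi_t(z)=\eta(z)/\eta(t^2z)$, both terms of $S(z)$ become honest Laurent series in $q$ with integer exponents. The substantive step is invariance under matrices of the form $R^{at}$ with $1\le a\le t-1$, which together with $T_{1,t}$ generate $\Gamma_0(t)$ up to $\pm I$. For these I would apply the matrix identity \eqn{matrels1}, namely $B_t R^{at} = W_t V_a T_{-a'/t}$, to rewrite $\mathcal{E}_{2,t}(tz)\,K^*(tz)\,\Phi_t(z)\,\mid_2\,R^{at}$ as an expression in $F(z)$ evaluated at the translated point $z-a'/t$, times an explicit scalar produced by the $\Phi_t$-transformation \eqn{PhitransRat} (contributing $\sqrt{t}\,e^{\pi i(t-1)/4}\,e^{-\pi i a't/12}\,\leg{a'}{t}$ and the eta-quotient $\eta(z)/\eta(z-a'/t)$) together with the transformation of $\mathcal{E}_{2,t}(tz)$ derived from \eqn{E2tWtrans} and the $B_t$-action.

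Summing the contributions from the translated $F$-series as $a$ runs over $1,\ldots,t-1$ should reassemble exactly the quadratic-character projection $\sum\leg{1-24n}{t}\beta_t(n)q^{n-1/24}$, multiplied by $\chi_{12}(t)$, with the phase $\sqrt{t}\,e^{\pi i(t-1)/4}$ being identified with $\chi_{12}(t)$ via the standard quadratic Gauss-sum evaluation for the character $\leg{\cdot}{t}$. This shows that the second term in $S(z)$ is exactly calibrated to cancel the non-invariant part of the first term under $R^{at}$, yielding the desired modular invariance. Holomorphy of $S(z)$ on $\mathcal{H}$ is clear, and at the cusps of $\Gamma_0(t)$ the polar behaviour is inherited from the weakly-holomorphic hypothesis on $K$ and from its Atkin-Lehner image $K^*$.

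The main obstacle I anticipate is the compatibility of the new weight-$2$ factor $\mathcal{E}_{2,t}(tz)$ with Atkin's Atkin-Lehner bookkeeping: in the original weight-$0$ argument this factor is absent, and here one must verify that $\mathcal{E}_{2,t}(tz)$, a weight-$2$ form on $\Gamma_0(t^2)$, transforms under $R^{at}$ by precisely the weight-$2$ scalar needed for the Gauss-sum cancellation to go through without stray residual terms. Once this single compatibility has been verified, the remainder of the proof reduces to a direct repetition of Atkin's quadratic-character bookkeeping with an additional weight-$2$ scalar carried through each step.
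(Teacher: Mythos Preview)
Your proposal has a genuine structural gap. The claim that $T_{1,t}$ together with the matrices $R^{at}$ ($1\le a\le t-1$) generate $\Gamma_0(t)$ up to $\pm I$ is false in general: the $R^{at}$ are all powers of the single parabolic element $R^t$, so you are asserting that $\Gamma_0(t)$ is generated by the two parabolic elements fixing its two cusps. For primes $t\equiv 1\pmod{4}$ (e.g.\ $t=5,13$) the group $\Gamma_0(t)$ has elliptic elements of order~$2$, and for $t\equiv 1\pmod{3}$ (e.g.\ $t=7,13$) it has elliptic elements of order~$3$; these are not in the subgroup generated by the two parabolics. So even if every computation you describe went through, you would not have established invariance under all of $\Gamma_0(t)$.

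There is also a logical conflation in your sketch: you announce that you will check $S\mid_2 R^{at}=S$ for each fixed $a$, but then speak of ``summing the contributions \dots\ as $a$ runs over $1,\ldots,t-1$'' to reassemble the quadratic twist. Verifying invariance under one $R^{at}$ does not involve a sum over $a$; the sum over $a$ is the signature of a \emph{trace} argument, which is a different (and in fact the correct) strategy. The paper proceeds precisely by that trace: one first notes that
\[
H(z):=\mathcal{E}_{2,t}(tz)\,K^{*}(tz)\,\Phi_t(z)
\]
is a weakly holomorphic modular form of weight~$2$ on the smaller group $\Gamma_0(t^2)$, and then, since $\{R^{at}:0\le a\le t-1\}$ is a set of coset representatives for $\Gamma_0(t^2)$ in $\Gamma_0(t)$, the sum $S_1(z):=\sum_{a=0}^{t-1} H\mid_2 R^{at}$ is \emph{automatically} a weight-$2$ form on $\Gamma_0(t)$, with no need to know generators of $\Gamma_0(t)$ at all. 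The explicit computation of $S_1$ (using exactly the ingredients you listed: \eqn{matrels1}, \eqn{PhitransRat}, \eqn{E2tWtrans}, and the Gauss-sum evaluation) then shows $S_1=S$. Recasting your argument in this trace form removes both obstacles at once.
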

\begin{proof}
Suppose $t\ge5$ is prime and $K(z)$, $K^{*}(z)$, $S(z)$ are defined
as in the statement of the proposition. The function
\beq
H(z) := \Eis{t}{tz} \, \Phi_t(z) \, \Ks{tz}
\mylabel{eq:Hdef}
\eeq
is  a weakly holomorphic modular form of weight $2$ on $\Gamma_0(t^2)$.
As in \cite[Lemma1]{At68b} the function
\beq
S_1(z) := \sum_{a=0}^{t-1} H(z) \stroke R^{at}
\eeq
is  a weakly holomorphic modular form of weight $2$ on $\Gamma_0(t)$.
Utilizing \eqn{matrels1}, \eqn{PhitransRat}, 
\eqn{E2tWtrans} and the evaluation of a quadratic Gauss sum
\cite[(1.7)]{Be-Ev} we find that
\beq
S_1(z) = 
 \mathcal{E}_{2,t}(tz) \, K^{*}(tz) \frac{\eta(z)}{\eta(t^2z)}
   - \frac{1}{\sqrt{t}} \, e^{\pi i(t-1)/4} \, \eta(z) \,
     \sum_{a'=1}^{t-1} e^{-\pi ia't/12} \leg{a'}{t} \,
                       \Eis{t}{z-a'/t}\,
                       \frac{K(z - a'/t)}{\eta(z - a'/t)}
\mylabel{eq:S1id}
\eeq
Here we have also used the fact that
\beq
\Eis{t}{z} \stroke R^{at} = - \frac{1}{t}\, \Eis{t}{z - a'/t},
\mylabel{eq:E2Rattrans}
\eeq
where $a\,a'\equiv 1\pmod{t}$.
Hence 
$$
S_1(z) = S(z).
$$
This gives the result.
\end{proof}

We illustrate Proposition \propo{wt2AtkinLemma1} with two examples:
\begin{align}
 S(z) &= \Eis{5}{z} \, \left(\frac{\eta(z)}{\eta(5z)}\right)^6
\qquad\mbox{($K(z)=1$ and $t=5$)}
\mylabel{eq:Seg1}\\
\noalign{\mbox{and}}
 S(z) &= \Eis{7}{z} \, 
\left( \left(\frac{\eta(z)}{\eta(7z)}\right)^8
                       + 3\,\left(\frac{\eta(z)}{\eta(7z)}\right)^4 \right)
\qquad\mbox{($K(z)=1$ and $t=7$)}.
\mylabel{eq:Seg2}
\end{align}

\begin{corollary}
\mylabel{cor:SW}
Suppose $t\ge5$ is prime and  $S(z)$, $K(z)$ and the sequence $\beta_t(n)$
are defined
as in Proposition \propo{wt2AtkinLemma1}. Then
\beq
S(z) \stroke W_t 
=  - \eta(tz) \, \sum_{tn -s_t\ge m} \beta_t(tn - s_t)
                    q^{n-\frac{t}{24}}.
\mylabel{eq:SWz}
\eeq
\end{corollary}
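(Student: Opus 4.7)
The plan is to apply $\stroke W_t$ directly to the representation $S(z) = \sum_{a=0}^{t-1} H(z)\stroke R^{at}$ established in the proof of Proposition \propo{wt2AtkinLemma1}, where $H(z) = \Eis{t}{tz}\,\Ks{tz}\,\Phi_t(z)$. By the matrix identity \eqn{matrels2} we have $R^{at}W_t = W_{t^2}Q_{a,t}$, so
$$
S(z)\stroke W_t \;=\; \sum_{a=0}^{t-1} \bigl(H\stroke W_{t^2}\bigr)\stroke Q_{a,t}.
$$
The problem therefore reduces to first computing $H\stroke W_{t^2}$ and then extracting the surviving Fourier coefficients from the $U_t$-style sum over $a$.

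I would first evaluate $H\stroke W_{t^2}$ factor by factor. The $2\times 2$ identity $B_t W_{t^2} = t W_t$, combined with scalar-invariance of the weight-two slash action and the relation $\Eis{t}{z}\stroke W_t = -\Eis{t}{z}$ from \eqn{E2tWtrans}, yields $\Eis{t}{tz}\stroke W_{t^2} = -t^{-1}\Eis{t}{z}$. Identity \eqn{PhitransW} directly gives $\Phi_t\stroke W_{t^2} = t\,\Phi_t^{-1}$. Since $W_t^2 = -tI$ and $\Ks{z}=K(z)\stroke W_t$, scalar-invariance again implies $\Ks{tz}\stroke W_{t^2}=K(z)$. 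Multiplying these three pieces and invoking the definition \eqn{betadef} produces
$$
H\stroke W_{t^2} \;=\; -\Eis{t}{z}\,\frac{K(z)}{\eta(z)}\,\eta(t^2z) \;=\; -\eta(t^2z)\sum_{n\ge m}\beta_t(n)\,q^{n-\frac{1}{24}}.
$$

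The final step is the sum over $a$. Acting by $Q_{a,t}$ substitutes $z\mapsto(z+a)/t$ and introduces a factor $1/t$ from $\det Q_{a,t}$, and the standard transformation $\eta(t(z+a))=e^{\pi i ta/12}\eta(tz)$ pulls $\eta(tz)$ outside the $a$-sum. The resulting phase on the $n$-th Fourier coefficient is $e^{\pi i ta/12}\,e^{2\pi i(n-\frac{1}{24})a/t}$, which, using $t^2-1=24s_t$, collapses to the additive character $e^{2\pi i a(n+s_t)/t}$. Orthogonality modulo $t$ then kills every $n\not\equiv -s_t\pmod{t}$ and contributes a factor $t$ when $n=tm-s_t$; this $t$ cancels the $1/t$, and the identity $(tm-s_t-\tfrac{1}{24})/t=m-\tfrac{t}{24}$ converts the Fourier exponent into $q^{m-\frac{t}{24}}$, delivering exactly \eqn{SWz}. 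The main obstacle is the careful bookkeeping of the several scalar factors together with the fusion of the $\eta$-transformation phase and the Fourier phase into a single clean residue character modulo $t$; once that Gauss-sum-style collapse is verified, the rest of the computation is formal.
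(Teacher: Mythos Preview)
Your proof is correct and follows exactly the approach the paper indicates: the paper's proof is the single sentence ``The result follows easily from \eqn{matrels2}, \eqn{PhitransW} and \eqn{E2tWtrans},'' and you have carried out precisely that computation in full detail, using the representation $S=\sum_{a}H\stroke R^{at}$ from the proof of Proposition~\propo{wt2AtkinLemma1}, the matrix relation $R^{at}W_t=W_{t^2}Q_{a,t}$, and the transformation laws for $\Eis{t}{z}$ and $\Phi_t$ to reduce to a $U_t$-type extraction of coefficients.
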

\begin{proof}
The result follows easily from \eqn{matrels2}, \eqn{PhitransW}
and \eqn{E2tWtrans}.
\end{proof}

We illustrate the corollary by applying $W$ to both sides of the
equations \eqn{Seg1}--\eqn{Seg2}:
\begin{align}
 \sum_{n=1}^\infty \beta_5(5n-1) q^{n-\frac{5}{24}} &= 
5^3\,\frac{\Eis{5}{z}}{\eta(5z)} \, \left(\frac{\eta(5z)}{\eta(z)}\right)^6
\qquad\mbox{($K(z)=1$ and $t=5$)}
\mylabel{eq:SSeg1}\\
\noalign{\mbox{and}}
 \sum_{n=1}^\infty \beta_7(7n-2) q^{n-\frac{7}{24}} &= 
7^2\,\frac{\Eis{7}{z}}{\eta(7z)} 
\left( 3\,\left(\frac{\eta(7z)}{\eta(z)}\right)^4
                       + 7^2\,\left(\frac{\eta(7z)}{\eta(z)}\right)^8 \right)
\qquad\mbox{($K(z)=1$ and $t=7$)}.
\mylabel{eq:SSeg2}
\end{align}

For $t$ and $K(z)$ as in Proposition \propo{wt2AtkinLemma1}
we define
\begin{align}
\Psi_{t,K}(z) = \Psi_t(z) 
&=
\mathcal{E}_{2,t}(tz) \, \frac{\Ks{tz}}{\eta(t^2z)}
 - \chi_{12}(t)\,\sum_{n=m}^\infty \leg{1-24m}{t}\,\beta_t(n) 
                               q^{n-\frac{1}{24}}
\mylabel{eq:Psidef}\\
&\qquad  -  \sum_{t^2n -s_t\ge m} \beta_t(t^2n - s_t)
                    q^{n-\frac{1}{24}},
\nonumber
\end{align}
where $\Ks{z}$ and the sequence $\beta_t(n)$ is defined in 
\eqn{betadef}--\eqn{Ksdef}.
We have the following analogue of \propo{AtkinLemma1}.
\begin{corollary}
\mylabel{cor:SSerre}
The function $\Psi_{t,K}(z) \,  \eta(z)$ is a 
weakly holomorphic 
modular form of weight $2$ 
on the full modular group $\Gamma(1)$.
\end{corollary}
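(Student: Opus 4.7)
The plan is to exhibit $\Psi_{t,K}(z)\,\eta(z)$ as the trace of the form $S(z)$ of Proposition \propo{wt2AtkinLemma1} from $\Gamma_{0}(t)$ up to the full modular group $\Gamma(1)$. Since $S\in M_{2}^{!}(\Gamma_{0}(t))$, its trace automatically lies in $M_{2}^{!}(\Gamma(1))$, which is exactly the desired conclusion.

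For the first step, I would fix right coset representatives for $\Gamma_{0}(t)\backslash\Gamma(1)$. A convenient choice is $\{I\}\cup\{ST^{j}:0\le j\le t-1\}$ where $S=\left(\begin{smallmatrix}0&-1\\1&0\end{smallmatrix}\right)$; the identity $ST^{j}=W_{t}\,Q_{j,t}$ is immediate from the matrix definitions in the paper. Since $\det Q_{j,t}=1/t$ and $k=2$, each contribution is
\[
S(z)\mid_{2} W_{t}Q_{j,t} \;=\; \tfrac{1}{t}\,\bigl(S\mid W_{t}\bigr)\!\left(\tfrac{z+j}{t}\right),
\]
and standard Fourier-averaging over $j$ yields $\sum_{j=0}^{t-1}S\mid_{2}W_{t}Q_{j,t}=U_{t}(S\mid W_{t})$, where $U_{t}\colon \sum a_{n}q^{n}\mapsto\sum a_{tn}q^{n}$. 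Hence
\[
\operatorname{Tr}^{\Gamma(1)}_{\Gamma_{0}(t)}S(z)\;=\;S(z)+U_{t}\bigl(S(z)\mid W_{t}\bigr).
\]

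The central step is to match this trace with $\Psi_{t,K}(z)\,\eta(z)$. Starting from Corollary \cor{SW}, I have $S\mid W_{t}=-\eta(tz)\sum_{n}\beta_{t}(tn-s_{t})\,q^{n-t/24}$. Since $\eta(tz)=q^{t/24}E(q^{t})$ with $E(q)=\prod(1-q^{n})$, the fractional powers cancel and $S\mid W_{t}$ becomes a Laurent series in integer powers of $q$, namely $-E(q^{t})\sum_{n}\beta_{t}(tn-s_{t})\,q^{n}$. Writing $E(q^{t})=\sum_{k}e_{k}q^{tk}$ and keeping only exponents divisible by $t$, the coefficient of $q^{M}$ after $U_{t}$ is $-\sum_{k}e_{k}\beta_{t}(t^{2}(M-k)-s_{t})$, so
\[
U_{t}(S\mid W_{t})\;=\;-E(q)\sum_{m}\beta_{t}(t^{2}m-s_{t})\,q^{m}\;=\;-\eta(z)\sum_{m}\beta_{t}(t^{2}m-s_{t})\,q^{m-1/24}.
\]
Comparing with \eqn{Psidef}, this is exactly $\eta(z)$ times the negative of the third term of $\Psi_{t,K}$, while the first two terms of $\Psi_{t,K}(z)\,\eta(z)$ multiplied out reproduce $S(z)$. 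Therefore $\Psi_{t,K}(z)\,\eta(z)=\operatorname{Tr}^{\Gamma(1)}_{\Gamma_{0}(t)}S(z)$. To conclude, note that both the $W_{t}$ slash action and the averaging $U_{t}$ preserve holomorphy on $\Hup$, so the trace of the weakly holomorphic form $S$ on $\Gamma_{0}(t)$ is a weakly holomorphic form of weight $2$ on $\Gamma(1)$.

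The main obstacle I anticipate is the bookkeeping in the $U_{t}$ computation: one has to verify that the factor $E(q^{t})$ inherited from $\eta(tz)$ in Corollary \cor{SW} genuinely collapses to $E(q)$ after $U_{t}$, and that this $E(q)$ then combines correctly with the $q^{1/24}$ prefactor to reassemble the $\eta(z)$ attached to the third term of $\Psi_{t,K}$. This clean cancellation relies on $\gcd(t,24)=1$ (automatic for $t\ge 5$ prime) and is what makes the definition \eqn{Psidef} precisely the trace; without it, no invariance under $\Gamma(1)$ would follow from a $\Gamma_0(t)$-form alone.
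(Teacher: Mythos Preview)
Your argument is correct and is essentially the paper's own proof: the paper invokes Serre's Lemma~7 to assert that $S(z)+S(z)\mid W_t\mid U_t$ lies in $M_2^{!}(\Gamma(1))$ and then applies $U_t$ to \eqn{SWz}, which is exactly your trace identity $\operatorname{Tr}^{\Gamma(1)}_{\Gamma_0(t)}S=S+U_t(S\mid W_t)$ together with your explicit matching of $U_t(S\mid W_t)$ with the third term of $\Psi_{t,K}$. You have simply unpacked Serre's lemma into its underlying coset computation and filled in the $E(q^t)\to E(q)$ bookkeeping that the paper leaves implicit.
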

\begin{proof}
Let $S(z)$ be defined as in \eqn{Sdef},
so that $S(z)$ is a weakly holomorphic modular form of weight $2$
on $\Gamma_0(t)$. By \cite[Lemma 7]{Se}, the function
\beq
S(z) + S(z) \stroke W_t \stroke U
\eeq
is a modular form of weight $2$ on $\Gamma(1)$. Here $U=U_t$ is 
the Atkin operator
\beq
g(z) \stroke U_t = \frac{1}{t} \sum_{a=0}^{t-1} g\left(\frac{z+a}{t}\right).
\mylabel{eq:Udef}
\eeq
The result then follows from applying the $U$-operator to equation
\eqn{SWz}.
\end{proof}

We illustrate the $K(z)=1$ case of Corollary \cor{SSerre} with two examples:
\begin{align}
\Psi_5(z) & = \frac{E_4(z)^2 \, E_6(z)}{\eta(z)^{25}}
\mylabel{eq:Psi5}\\
\noalign{\mbox{and}}
\Psi_7(z) & = \frac{1}{\eta(z)^{49}}
\left( E_4(z)^5\, E_6(z) - 745 E_4(z)^2\, E_6(z)\,\Delta(z)\right).
\mylabel{eq:Psi7}
\end{align}

We need a weight $2$ analogue of \cite[Lemma 3]{At68b}.
For $t=5$, $7$ or $13$ the genus of $\Gamma_0(t)$ is zero,
and a Hauptmodul is
\beq
G_t(z) := \left(\frac{\eta(z)}{\eta(tz)}\right)^{24/(t-1)}.
\mylabel{eq:Hauptmodul}
\eeq
This function satisfies
\beq
G_t\left(\frac{-1}{tz}\right) = t^{12/(t-1)} G_t(z)^{-1}.
\mylabel{eq:Gttrans}
\eeq
\begin{proposition}
\mylabel{propo:wt2AtkinLemma3}
Suppose $t=5$, $7$ or $13$, and let $m$ be any negative integer such that
$24m\not\equiv1\pmod{t}$. Suppose  constants $k_j$ $(1\le j \le -m)$
are chosen so that
\beq
\beta_t(n) = 0, \qquad \mbox{for $m+1 \le n \le -1$},
\mylabel{eq:betatcond}
\eeq
where
\beq
K(z) = G_t(z)^{-m} + \sum_{k=1}^{-m-1} k_j G_t(z)^j
\mylabel{eq:KGm}
\eeq
and
\beq
\sum_{n=m}^\infty \beta_t(n) q^{n-\frac{1}{24}} = \Eis{t}{z} \frac{K(z)}{\eta(z)}.
\eeq
Then
\beq
\beta_t(n) = 0, \qquad \mbox{for}\quad \leg{1-24n}{t} = -\leg{1-24m}{t}.
\mylabel{eq:betatresult}
\eeq
\end{proposition}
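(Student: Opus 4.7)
The plan is to adapt Atkin's argument in \cite[Lemma~3]{At68b} from the weight-zero setting to weight $2$, using Proposition~\propo{wt2AtkinLemma1} and Corollary~\cor{SSerre} as the principal structural tools. First, I form $F(z) := \Psi_{t,K}(z)\,\eta(z)$, which by Corollary~\cor{SSerre} is a weakly holomorphic modular form of weight $2$ on the full modular group $\Gamma(1)$. The free parameters $k_j$ in $K(z)$ have been calibrated precisely to kill $\beta_t(n)$ for $m+1 \le n \le -1$; consequently the principal part of the twisted-Eisenstein summand $-\chi_{12}(t)\,\eta(z)\sum \leg{1-24n}{t}\beta_t(n) q^{n-1/24}$ of $F$ reduces essentially to the single $n = m$ contribution.

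Next, I decompose $F = F_1 + F_2 + F_3$ according to the three summands of $\Psi_{t,K}(z)\cdot\eta(z)$. Since $K^{*}(z) = K(z) \stroke W_t$ vanishes to order $\ge 1$ at $i\infty$ (because $K$ has its pole only at the cusp $\infty$ of $\Gamma_0(t)$, and $W_t$ interchanges the cusps $\infty$ and $0$), the first piece $F_1 = \mathcal{E}_{2,t}(tz)\,K^{*}(tz)\,\eta(z)/\eta(t^2 z)$ has $q$-expansion supported on exponents $\ge t - s_t > 0$, so it contributes nothing to the principal part of $F$. By Corollary~\cor{SW}, the third piece $F_3 = -\eta(z)\sum \beta_t(t^2 n - s_t)q^{n - 1/24}$ touches the values of $\beta_t$ only at indices $N = t^2 n - s_t$, for which $\leg{1-24N}{t} = \leg{t^2}{t} = 0$; thus $F_3$ involves only $\beta_t$ in the ``singular'' Legendre class, disjoint from either of the $\pm$-classes.

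The decisive input is the rigidity of $M_2^!(\Gamma(1))$: since $M_2(\Gamma(1)) = \{0\}$, every element of $M_2^!(\Gamma(1))$ has vanishing constant term and is uniquely determined by its principal part via a basis $\{f_n\}_{n\ge 1}$ with $f_n = q^{-n} + 0\cdot q^0 + O(q)$ (e.g.\ $f_1 = E_4^2 E_6/\Delta$, with further $f_n$ obtained inductively by multiplying by $j$ and clearing unwanted intermediate coefficients). Writing $F$ in this basis, the positive-index coefficients $a_N(F)$ are completely determined by the principal-part data (namely $\beta_t(m)$ together with the relevant $\beta_t(t^2 n - s_t)$); matching against the formulas for $a_N(F)$ coming from $F_1 + F_2 + F_3$ gives an index-by-index linear equation solvable for $\beta_t(N)$. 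For an index $N$ in the ``wrong'' Legendre class --- that is, with $\leg{1-24N}{t} = -\leg{1-24m}{t}$ --- the right-hand side of the equation is already accounted for by the known $F_1$, $F_3$, and principal-part contributions, forcing $\beta_t(N) = 0$.

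The main obstacle will be executing this last step rigorously, because the coefficient of $q^N$ in $F_2$ is not simply $-\chi_{12}(t)\leg{1-24N}{t}\beta_t(N)$ but a convolution with $\prod(1-q^k)$ involving $\beta_t$ at various indices $\le N$. The argument must therefore proceed inductively on $N$, with careful tracking of how the translation in the convolution interacts with the Legendre symbol modulo~$t$. This is the weight-$2$ analogue of Atkin's inductive bookkeeping in \cite[Lemma~3]{At68b}, the new ingredient here being the zero-constant-term rigidity peculiar to weight $2$ on the full modular group.
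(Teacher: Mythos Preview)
Your approach has a genuine gap: passing to $M_2^!(\Gamma(1))$ via Corollary~\cor{SSerre} discards the $\Gamma_0(t)$ structure that is the actual source of the Legendre-class separation, and nothing in your outline recovers it. Concretely, the basis $\{f_n\}$ of $M_2^!(\Gamma(1))$ has no special behaviour modulo $t$, so the statement that for $N$ in the ``wrong'' class ``the right-hand side is already accounted for, forcing $\beta_t(N)=0$'' is an assertion without a mechanism. Moreover, your proposed induction on $N$ is not even well-posed: the coefficient $[q^N]F_3$ involves $\beta_t(t^2 n - s_t)$ for $1\le n\le N$, i.e.\ values of $\beta_t$ at indices up to $t^2 N - s_t \gg N$, so the equations do not close up step by step. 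The convolution issue you flag for $F_2$ is real, but it is secondary to these two problems.

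The paper instead stays on $\Gamma_0(t)$. Starting from $S(z)$ of Proposition~\propo{wt2AtkinLemma1}, one forms
\[
B(z) := S(z) + \chi_{12}(t)\,\leg{1-24m}{t}\,\Eis{t}{z}\,K(z),
\]
a weakly holomorphic weight-$2$ form on $\Gamma_0(t)$. The choice of sign kills the $q^m$ term, and together with the hypothesis $\beta_t(n)=0$ for $m+1\le n\le -1$ (and the estimate \eqn{S1ord}) one gets $\ord_{i\infty}(B)\ge 0$; the transformation \eqn{Gttrans} and Corollary~\cor{SW} give $\ord_{0}(B)>0$. Hence $B\in M_2(\Gamma_0(t))$, which for $t=5,7,13$ is one-dimensional, spanned by $\Eis{t}{z}$. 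Since $B$ vanishes at the cusp $0$ and $\Eis{t}{z}$ does not, $B\equiv 0$. Dividing the identity $B(z)=0$ by $E(q)$ yields
\[
q^{-s_t}\,\Eis{t}{tz}\,K^{*}(tz)\,\frac{1}{E(q^{t^2})}
= \chi_{12}(t)\sum_n\left(\leg{1-24n}{t}-\leg{1-24m}{t}\right)\beta_t(n)\,q^n,
\]
and the left side is supported on exponents $\equiv -s_t\pmod t$, i.e.\ on the class with $\leg{1-24n}{t}=0$. This immediately forces $\beta_t(n)=0$ whenever $\leg{1-24n}{t}=-\leg{1-24m}{t}$. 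The point is that the separation comes from a \emph{vanishing} on $\Gamma_0(t)$, not from matching coefficients against a $\Gamma(1)$ basis.
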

\begin{proof}
Suppose $t=5$, $7$ or $13$, and $m$ is a negative integer such that
$24m\not\equiv1\pmod{t}$. Suppose $K(z)$ is chosen so that
\eqn{betatcond} holds. Let $S(z)$ be defined as in \eqn{Sdef},
and define
\beq
B(z) := S(z) + \chi_{12}(t) \, \leg{1-24m}{t}\, \Eis{t}{z}\,K(z),
\mylabel{eq:Bdef}
\eeq
so that
\beq
B(z) \stroke W_t = S^{*}(z) 
                 - \chi_{12}(t) \, \leg{1-24m}{t}\, \Eis{t}{z}\,\Ks{z},
\mylabel{eq:Btrans}
\eeq
where
\beq
S^{*}(z) = S(z) \stroke W_t.
\mylabel{eq:SSdef}
\eeq
Since $24m\not\equiv1\pmod{t}$, we see that
\beq
\ord_{0}(S(z)) = \ord_{i\infty}(S^{*}(z)) > 0.
\eeq
From \eqn{Gttrans} and \eqn{KGm} we see that
\beq
\ord_{0}(K(z)) = \ord_{i\infty}(K^{*}(z)) > 0
\eeq
and hence
\beq
\ord_{0}(B(z)) > 0.
\mylabel{eq:Bord0}
\eeq
Now
\beq
\ord_{i\infty}\left(
\mathcal{E}_{2,t}(tz) \, K^{*}(tz) \frac{\eta(z)}{\eta(t^2z)}
\right)\ge t - \frac{1}{24}(t^2-1) > 0,
\mylabel{eq:S1ord}
\eeq
for $t=5$, $7$, $13$. By construction the coefficient of $q^m$ in $B(z)$ is 
zero 
and so \eqn{betatcond}, \eqn{S1ord} imply that 
\beq
\ord_{i\infty}(B(z)) \ge 0.
\eeq
Therefore $B(z)$ is an entire modular form of weight $2$ and
hence a multiple of $\Eis{t}{z}$ since there are no nontrivial cusp
forms of weight $2$ on $\Gamma_0(t)$ for $t=5$, $7$ or $13$
by \cite{Co-Oe}. This implies that $B(z)$ is identically zero
by \eqn{Bord0}. Hence
\beq
\frac{B(z)}{E(q)}
 = q^{-s_t} \mathcal{E}_{2,t}(tz) \, K^{*}(tz) \frac{1}{E(q^{t^2})}
 - \chi_{12}(t)\,
\sum_{n=m}^\infty \left(\leg{1-24n}{t}-\leg{1-24m}{t}\right)\,\beta_t(n) 
                               q^{n}
=0.
\mylabel{eq:Bid}
\eeq
Since $-24s_t-1\equiv0\pmod{t}$, this implies that $\beta_t(n)=0$
whenever
$\leg{1-24n}{t} = -\leg{1-24m}{t}$.
\end{proof}


We illustrate Proposition \propo{wt2AtkinLemma3} with two examples:
\begin{align}
&\sum_{n=-2}^\infty \beta_5(n) q^n = \frac{\Eis{5}{z}}{E(q)}\,\left(G_5(z)^2 + 5\, G_5(z)\right) 
\mylabel{eq:ALem3EG1}\\
&=q^{-2}+1-379\,q^3+625\,q^4+869\,q^5-20125\,q^8+23125\,q^9+25636\,q^{10}
-329236\,q^{13} +\cdots.
\nonumber
\end{align}
In this example, $t=5$ and $m=-2$, and we see that
$\beta_5(n)=0$ for $n\equiv1,2\pmod{5}$. In our second example, $t=7$ and
$m=-1$.
\begin{align}
&\sum_{n=-1}^\infty \beta_7(n) q^n = \frac{\Eis{7}{z}}{E(q)}\,G_7(z) 
\mylabel{eq:ALem3EG2}\\
&=q^{-1}+1-15\,q^2+49\,q^5-24\,q^6+88\,q^7-311\,q^9+392\,q^{12}-182\,q^{13}
+811\,q^{14}-1886\,q^{16} + \cdots.
\end{align}
In this example we see that
$\beta_7(n)=0$ for $n\equiv1,3,4\pmod{7}$.

The function
\begin{align}
&\frac{E_4(z)^2\,E_6(z)}{\Delta(z)} = \frac{E_6(z)}{E_4(z)}\,j(z)
\mylabel{eq:E46D} \\
& \quad = 
q^{-1} -196884\,q-42987520\,q^2-2592899910\,q^3-80983425024\,q^4-1666013203000\,q^5 + \cdots
\nonumber                    
\end{align}
is a modular form of weight $2$ on $\Gamma(1)$. As a modular form on
$\Gamma_0(t)$ it has a simple pole at $i\infty$ and a pole of order
$t$ at $z=0$. When $t=5$, $7$ or $13$, it is straightforward to show that
there are integers $a_{j,t}$ ($-1\le j \le t$) such that
\beq
\frac{E_6(z)}{E_4(z)}\,j(z) = \Eis{t}{z}\,\sum_{j=-1}^t a_{j,t} G_t(z)^j.
\mylabel{eq:E46Did}
\eeq
For example,
\begin{align}
 \frac{E_6(z)}{E_4(z)}\,j(z)
&= \Eis{5}{z} \, \left( G_5(z)  
 - 3^2\cdot5^5\cdot7\,G_5(z)^{-1} 
 - \cdot2^3\cdot5^8\cdot13\,G_5(z)^{-2}  -
   \cdot3^3\cdot5^{10}\cdot7\,G_5(z)^{-3} \right.
\mylabel{eq:E46D5id}\\
&
\qquad \left. - 3\cdot2^3\cdot5^{13}\,G_5(z)^{-4}  
   -5^{16}\,G_5(z)^{-5}\right).
\nonumber
\end{align}

Reducing \eqn{E46Did} mod $t^c$ we obtain
a weight $2$ analogue of \cite[Lemma 4]{At68b}.
\begin{lemma}
\mylabel{lem:aLem4}
We have
\begin{align}
\frac{E_6(z)}{E_4(z)}\,j(z)
&\equiv \Eis{5}{z}\,\left(G_5(z) + 2\cdot31\cdot5^5\,G_5(z)^{-1}\right) \pmod{5^8},
\mylabel{eq:E46Dmod5}\\
\frac{E_6(z)}{E_4(z)}\,j(z) &\equiv  \Eis{7}{z}\,G_7(z) \pmod{7^4},
\mylabel{eq:E46Dmod7}\\
\frac{E_6(z)}{E_4(z)}\,j(z) &\equiv  \Eis{13}{z}\,G_{13}(z) \pmod{13^2}.
\mylabel{eq:E46Dmod13}
\end{align}
\end{lemma}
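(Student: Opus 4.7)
The plan is to make the identity \eqn{E46Did} explicit for each $t\in\{5,7,13\}$ and then directly bound the $t$-adic valuations of the coefficients $a_{j,t}$. First I would note that the quotient $(E_6(z)/E_4(z))\,j(z)/\mathcal{E}_{2,t}(z)$ is a weakly holomorphic modular function on $\Gamma_0(t)$, hence a Laurent polynomial in the Hauptmodul $G_t(z)$. The range of exponents is pinned down by the orders of the numerator and denominator at the cusps $0$ and $i\infty$: the numerator has a simple pole at $i\infty$ coming from $j(z)$ and a pole of order $t$ at $z=0$ (since $j$ has a pole of total order $t+1$ on $X_0(t)$, of which one lies at $i\infty$), while $\mathcal{E}_{2,t}(z)$ is holomorphic and nonvanishing at both cusps by \eqn{E2tWtrans}. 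This gives the expansion
\beqs
\frac{E_6(z)}{E_4(z)}\,j(z) = \mathcal{E}_{2,t}(z) \sum_{j=-t}^{1} a_{j,t}\,G_t(z)^j,
\eeqs
exactly of the shape illustrated in \eqn{E46D5id}. The coefficient $a_{1,t}=1$ is fixed by matching the principal $q^{-1}$ term, and the remaining $a_{j,t}$ are determined by iteratively subtracting the leading $q$-term with the appropriate multiple of $\mathcal{E}_{2,t}(z)\,G_t(z)^j$ for decreasing $j$ (a triangular linear system with integer coefficients).

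For $t=5$, the computation is already displayed in \eqn{E46D5id}. I would simply read off the $5$-adic valuations: $v_5(a_{-2,5})=8$, $v_5(a_{-3,5})=10$, $v_5(a_{-4,5})=13$, $v_5(a_{-5,5})=16$, each $\ge 8$, so those four terms vanish modulo $5^8$. For the surviving $G_5^{-1}$ coefficient,
\beqs
a_{-1,5} = -3^2\cdot 5^5\cdot 7 = -63\cdot 5^5 \equiv (-63 + 125)\cdot 5^5 = 62\cdot 5^5 = 2\cdot 31\cdot 5^5 \pmod{5^8},
\eeqs
since $125\cdot 5^5 = 5^8$. Combined with $a_{1,5}=1$, this yields \eqn{E46Dmod5}.

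For $t=7$ and $t=13$, I would run the same expansion procedure to obtain explicit integer coefficients $a_{j,t}$ for $-t\le j \le 1$, and then check term-by-term that $v_t(a_{j,t}) \ge c$ for every $j \le -1$, where $(c,t)=(4,7)$ and $(c,t)=(2,13)$. Once that divisibility is in place, only the $j=1$ term $a_{1,t}G_t(z)=G_t(z)$ survives modulo $t^c$, giving \eqn{E46Dmod7} and \eqn{E46Dmod13}.

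The main obstacle is precisely this $t$-adic valuation check for $t=7$ and $t=13$: the lemma asserts rather strong divisibilities of \emph{every} negative-index coefficient, and these do not fall out of the structural setup alone but reflect Atkin-type congruences for weight-$2$ forms on $\Gamma_0(t)$ (the weight-$2$ analogue of \cite[Lemma 4]{At68b}). In practice the coefficients $a_{j,t}$ would be computed symbolically and the required valuations verified one at a time; the underlying reason such divisibility must hold is the analogous cuspidal-mod-$t^c$ vanishing that drives Atkin's original partition congruences for $p(n)$.
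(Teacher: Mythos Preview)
Your approach is exactly the one the paper takes: the paper establishes the exact identity \eqn{E46Did} (with the explicit $t=5$ instance \eqn{E46D5id}) and then simply says ``Reducing \eqn{E46Did} mod $t^c$ we obtain a weight $2$ analogue of \cite[Lemma 4]{At68b},'' offering no further argument. Your write-up is in fact more detailed than the paper's, since you carry out the $5$-adic reduction explicitly and correctly (including the identification $-63\cdot5^5\equiv 2\cdot31\cdot5^5\pmod{5^8}$); for $t=7,13$ both you and the paper defer to the straightforward computation of the integers $a_{j,t}$ and inspection of their $t$-adic valuations. One small remark: you only list the check for $j\le -1$, but the constant coefficient $a_{0,t}$ must also be handled; in fact $a_{0,t}=0$ identically (both $E_4^2E_6/\Delta$ and $\mathcal{E}_{2,t}G_t$ have vanishing $q^0$-term), which is why no $G_t(z)^0$ appears in \eqn{E46D5id}.
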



We also need \cite[Lemma 4]{At68b}.
\begin{lemma}[Atkin \cite{At68b}]
\mylabel{lem:Lem4}[Atkin \cite{At68b}]
We have
\begin{align}
j(z)                                         
&\equiv G_5(z) + 750 + 3^2\cdot 7\cdot5^5\,G_5(z)^{-1} \pmod{5^8},
\mylabel{eq:jmod5}\\
j(z) &\equiv  G_7(z)  + 748 \pmod{7^4},
\mylabel{eq:jmod7}\\
j(z) &\equiv  G_{13}(z)  + 70\pmod{13^2}.
\mylabel{eq:jmod13}
\end{align}
\end{lemma}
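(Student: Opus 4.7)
The plan is to expand $j(z)$ as a Laurent polynomial in the Hauptmodul $G_t(z)$ of $\Gamma_0(t)$ for $t \in \{5, 7, 13\}$ and then verify that the non-principal coefficients carry the claimed $t$-adic valuations.

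First I would pin down the divisors on $\Gamma_0(t)$. From its product formula $G_t(z)$ has a simple pole at the cusp $i\infty$; applying \eqn{Gttrans} shows it has a simple zero at the cusp $0$. As a function on $\Gamma_0(t)$, $j(z)$ has a simple pole at $i\infty$, and since $j(-1/(tz)) = j(tz)$ has $q$-expansion $q^{-t} + 744 + O(q^t)$, it has a pole of order exactly $t$ at the cusp $0$. Because $\Gamma_0(t)$ has genus zero and $G_t$ generates its function field, matching divisors forces the identity
\beqs
j(z) = G_t(z) + a_0 + \sum_{k=1}^{t} a_{-k}\, G_t(z)^{-k}
\eeqs
for uniquely determined rational numbers $a_0, a_{-k}$, and integrality of these coefficients then follows from the integrality of the $q$-expansions of both sides.

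Next I would compute the coefficients explicitly. Applying the Fricke involution $W_t$ sends $G_t(z) \mapsto t^{12/(t-1)}/G_t(z)$ and $j(z) \mapsto j(tz) = q^{-t} + 744 + O(q^t)$, so substituting into the structural identity converts it into an identity between ordinary $q$-expansions at $i\infty$; the $a_{-k}$ can then be read off one at a time, starting from the leading coefficient $a_{-t} = t^{12t/(t-1)}$ and proceeding downward by a triangular system. For the three primes $t = 5, 7, 13$ this is a bounded integer computation, and substituting the results back into the identity yields \eqn{jmod5}, \eqn{jmod7}, \eqn{jmod13} by inspection.

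The main obstacle is the high $t$-adic valuation of the $a_{-k}$ for $k \ge 2$: for $t = 5$ one needs $5^8 \mid a_{-2}, \ldots, a_{-5}$, while for $t = 7$ and $t = 13$ the coefficients $a_{-2}, \ldots, a_{-t}$ must vanish modulo $t^4$ and $t^2$ respectively. The underlying mechanism is the Fermat-type congruence $\eta(z)^t \equiv \eta(tz) \pmod{t}$, which forces high powers of $G_t(z)^{-1}$ to have enhanced $t$-divisibility in their Fourier coefficients (cleanly visible after applying $W_t$ and comparing with $j(tz)$). A useful consistency check is afforded by Lemma \lem{aLem4}: multiplying \eqn{jmod5}--\eqn{jmod13} through by $E_6(z)/E_4(z)$ and comparing term by term with \eqn{E46Dmod5}--\eqn{E46Dmod13} must reproduce the constants $750$, $748$, $70$, and $3^2 \cdot 7 \cdot 5^5$, which pins them down.
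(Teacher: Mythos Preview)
The paper does not actually prove this lemma: it is quoted as Atkin's Lemma~4 from \cite{At68b}, with only a remark noting that a misprint in the $t=5$ case has been corrected. So there is no ``paper's own proof'' to compare against here.

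That said, your outline is the standard and correct route, and is presumably what Atkin did. Writing $j(z)$ as a Laurent polynomial in the Hauptmodul $G_t(z)$ is forced by the divisor considerations you describe, and the coefficients are then determined by a finite triangular computation; checking the $t$-adic valuations is then a mechanical (if somewhat tedious) verification. Your computation $a_{-t}=t^{12t/(t-1)}$ is right, and for $t=5,7,13$ this already gives $5^{15}$, $7^{14}$, $13^{13}$, comfortably exceeding the needed moduli; the remaining coefficients $a_{-k}$ require explicit calculation, and your appeal to $\eta(z)^t\equiv\eta(tz)\pmod t$ is the right heuristic for why the divisibility propagates. One small caution: your proposed ``consistency check'' against Lemma~\lem{aLem4} does not work as stated, since multiplying \eqn{jmod5}--\eqn{jmod13} by $E_6/E_4$ does not directly produce the expressions in \eqn{E46Dmod5}--\eqn{E46Dmod13}; those identities live in a weight-$2$ setting involving $\Eis{t}{z}$, which is not simply $E_6/E_4$ times a constant. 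The two lemmas are parallel rather than related by a simple multiplication, so this would not serve to pin down the constants.
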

\begin{remark}
In equation \eqn{jmod5} we have corrected a misprint in  \cite[Lemma 4]{At68b}.
\end{remark}

To handle the $(t,c)=(5,6)$ case of Theorem \thm{aAtkincong} we will need
\begin{lemma}
\mylabel{lem:aLem4b}
\begin{align}
\frac{E_6(z)}{E_4(z)}\,j(z) 
&\equiv \Eis{5}{z}\left(G_5(z) + 2\cdot5^5\,G_5(z)^{-1}\right) \pmod{5^6},
\mylabel{eq:j1mod56}\\
\frac{E_6(z)}{E_4(z)}\,j(z)^2
&\equiv \Eis{5}{z}\left(2\cdot3\cdot5^3\,G_5(z) + G_5(z)^{2}\right) \pmod{5^6},
\mylabel{eq:j2mod56}\\
\noalign{\mbox{and}}
\frac{E_6(z)}{E_4(z)}\,j(z)^a
&\equiv \Eis{5}{z}\left(\varepsilon_{1,a} \,G_5(z)^{a-2} + 
\varepsilon_{2,a} \, G_5(z)^{a-1} + G_5(z)^a\right) \pmod{5^6},
\mylabel{eq:jamod56}
\end{align}
for $a\ge3$, where $\varepsilon_{1,a}$,  $\varepsilon_{2,a}$ are integers satisfying
\beq
\varepsilon_{1,a}\equiv0\pmod{5^5}\quad \mbox{and}\qquad
\varepsilon_{2,a}\equiv0\pmod{5^3}.
\mylabel{eq:epmod56conds}
\eeq
\end{lemma}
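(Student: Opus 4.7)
The plan is to derive \eqn{j1mod56} by direct reduction modulo $5^6$ of the exact identity \eqn{E46D5id}, then derive \eqn{j2mod56} by multiplying the resulting $G_5$-polynomial by the reduction of Atkin's \eqn{jmod5} from Lemma \lem{Lem4}, and finally establish \eqn{jamod56} by induction on $a$ with \eqn{j2mod56} serving as the base case $a=2$ (interpreted with $\varepsilon_{1,2}=0$ and $\varepsilon_{2,2}=750$).

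For \eqn{j1mod56}, the coefficients of $G_5^{-2},\ldots,G_5^{-5}$ in \eqn{E46D5id} are divisible by $5^8,5^{10},5^{13},5^{16}$ respectively and so vanish modulo $5^6$, while the $G_5^{-1}$ coefficient satisfies $-3^2\cdot 7\cdot 5^5\equiv 2\cdot 5^5\pmod{5^6}$ since $-63\equiv 2\pmod{5}$. For \eqn{j2mod56}, first reduce \eqn{jmod5} to $j(z)\equiv G_5(z)+750+3\cdot 5^5\,G_5(z)^{-1}\pmod{5^6}$ (using $3^2\cdot 7\equiv 3\pmod{5}$), and expand
\[
\bigl(G_5 + 2\cdot 5^5\,G_5^{-1}\bigr)\bigl(G_5 + 750 + 3\cdot 5^5\,G_5^{-1}\bigr)
= G_5^2 + 750\,G_5 + 5^6 + 1500\cdot 5^5\,G_5^{-1} + 6\cdot 5^{10}\,G_5^{-2}.
\]
Every term other than $G_5^2 + 750\,G_5 = G_5^2 + 2\cdot 3\cdot 5^3\,G_5$ vanishes modulo $5^6$, the arithmetic coincidence $2\cdot 5^5 + 3\cdot 5^5 = 5^6$ being precisely what kills the $G_5^0$ contribution.

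For the inductive step, multiply the hypothesis $\varepsilon_{1,a}G_5^{a-2} + \varepsilon_{2,a}G_5^{a-1} + G_5^a$ by the reduced form of $j$. The coefficients of $G_5^{a-2}$ and $G_5^{a-3}$ in the product are $750\,\varepsilon_{1,a}+3\cdot 5^5\,\varepsilon_{2,a}$ and $3\cdot 5^5\,\varepsilon_{1,a}$ respectively; under the inductive hypotheses these have $5$-adic valuations at least $5^{3+5}=5^8$, $5^{5+3}=5^8$, and $5^{5+5}=5^{10}$, hence vanish modulo $5^6$. The $G_5^{a-1}$ coefficient $\varepsilon_{1,a}+750\,\varepsilon_{2,a}+3\cdot 5^5$ simplifies modulo $5^6$ to $\varepsilon_{1,a}+3\cdot 5^5$ because $750\,\varepsilon_{2,a}$ is divisible by $5^{3+3}=5^6$. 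The surviving recursions
\[
\varepsilon_{1,a+1}\equiv \varepsilon_{1,a}+3\cdot 5^5\pmod{5^6},\qquad
\varepsilon_{2,a+1}\equiv \varepsilon_{2,a}+750\pmod{5^6}
\]
immediately preserve the required divisibilities $\varepsilon_{1,a+1}\equiv 0\pmod{5^5}$ and $\varepsilon_{2,a+1}\equiv 0\pmod{5^3}$ since $v_5(3\cdot 5^5)=5$ and $v_5(750)=3$. The main obstacle is nothing deeper than careful bookkeeping of the $5$-adic valuations across the cross-terms; the key structural point is that the three-term shape closes up at every step because $v_5(750)=3$ and $v_5(3\cdot 5^5)=5$ conspire with the inductive divisibilities to push every unwanted product past the modulus $5^6$.
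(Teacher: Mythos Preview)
Your proof is correct and follows precisely the route the paper sketches (``from Lemmas \lem{aLem4} and \lem{Lem4}, some calculation and an easy induction argument''): you reduce the exact identity \eqn{E46D5id} (equivalently Lemma \lem{aLem4}) modulo $5^6$, multiply by the reduction of \eqn{jmod5}, and run the obvious induction on $a$. One tiny notational slip: when you write ``$5$-adic valuations at least $5^{3+5}=5^8$'' you mean the valuations are at least $3+5=8$ (equivalently, the terms are divisible by $5^8$); the arithmetic itself is right.
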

\begin{proof}
The result can be proved from Lemmas \lem{aLem4} and \lem{Lem4},
some calculation and an easy induction argument.
\end{proof}

We need bases for $M_{2+12s_\ell}(\Gamma_0(t))$ for $t=5$, $7$, $13$.
The following result follows from \cite{Co-Oe} and by checking
the modular forms involved are
holomorphic at the cusps $i\infty$ and $0$.
\begin{lemma}
\mylabel{lem:bases}
Suppose $t=5$, $7$ or $13$ and $\ell>3$ is prime. Then
\beq
\dim M_{2+12s_\ell}(\Gamma_0(t)) = 1 + (1+t)\,s_\ell,
\mylabel{eq:dims}
\eeq
and the set
\beq
\left\{\Eis{t}{z}\,\Delta(z)^{s_\ell}\, G_t(z)^a\,:\, -ts_\ell \le a \le s_\ell
\right\}
\eeq
is a basis for $M_{2+12s_\ell}(\Gamma_0(t))$.
\end{lemma}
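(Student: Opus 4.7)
The plan is: (1) extract $\dim M_{2+12s_\ell}(\Gamma_0(t)) = 1+(t+1)s_\ell$ from the Cohen--Oesterl\'e formula, and then (2) show the proposed family has the right cardinality and consists of linearly independent forms in the space. Once (1) and the cardinality agree, linear independence will force the family to be a basis.

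For (1), I would record the invariants of $\Gamma_0(t)$ for $t\in\{5,7,13\}$: in each case the genus is $0$ and there are $\nu_\infty = 2$ cusps (at $i\infty$ and $0$), while the number of order-$2$ and order-$3$ elliptic points, determined by $\leg{-1}{t}$ and $\leg{-3}{t}$, is $(\nu_2,\nu_3) = (2,0),(0,2),(2,2)$ respectively. Inserting these data together with $k = 2+12s_\ell$ (so $\lfloor k/4\rfloor = 3s_\ell$ and $\lfloor k/3\rfloor = 4s_\ell$) into the standard even-weight dimension formula
\[
\dim M_k(\Gamma_0(t)) = (k-1)(g-1) + \tfrac{k}{2}\,\nu_\infty + \lfloor k/4\rfloor\,\nu_2 + \lfloor k/3\rfloor\,\nu_3
\]
yields $1+(t+1)s_\ell$ in all three cases.

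For (2), set $f_a(z) := \Eis{t}{z}\,\Delta(z)^{s_\ell}\,G_t(z)^a$. The weights $2$, $12s_\ell$, $0$ add to $2+12s_\ell$ and $\Gamma_0(t)$-invariance is automatic, so only holomorphy at the two cusps requires checking. At $i\infty$ the three factors have $q$-expansions with leading exponents $0$, $s_\ell$, $-a$, so $f_a$ begins with $q^{s_\ell - a}$, giving holomorphy iff $a \le s_\ell$. For the cusp $0$ I act on each factor by the Fricke involution $W_t$: equation \eqn{E2tWtrans} gives $\Eis{t}{z}\mid W_t = -\Eis{t}{z}$; the identity $\Delta(-1/w) = w^{12}\Delta(w)$ yields $\Delta\mid_{12}W_t(z) = t^6 \Delta(tz)$, so $\Delta(z)^{s_\ell}\mid W_t$ begins with $q^{ts_\ell}$; and \eqn{Gttrans} gives $G_t(z)^a\mid_0 W_t = t^{12a/(t-1)}\,G_t(z)^{-a}$, whose expansion begins with $q^a$. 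Multiplying, the Fourier series of $f_a\mid W_t$ at $i\infty$ has leading exponent $ts_\ell + a$, so holomorphy at $0$ holds iff $a \ge -ts_\ell$. The two conditions intersect in precisely $-ts_\ell \le a \le s_\ell$.

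Linear independence is then immediate from the leading $i\infty$-exponents: as $a$ runs over the admissible interval the orders $s_\ell-a$ take the $(t+1)s_\ell+1$ distinct values $0,1,\ldots,(t+1)s_\ell$, so the $f_a$ are linearly independent. Combined with the dimension count from (1), the family is forced to be a basis. The only step that is not pure bookkeeping is the Fricke-involution computation at the cusp $0$; the rest is routine manipulation of $q$-expansions together with the transformation formulas already recorded in the paper.
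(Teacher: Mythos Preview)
Your proposal is correct and is exactly the approach the paper has in mind: the paper's proof is a one-line reference to Cohen--Oesterl\'e for the dimension together with ``checking the modular forms involved are holomorphic at the cusps $i\infty$ and $0$,'' and you have simply spelled out those two steps (via the explicit genus/elliptic-point data and the Fricke involution $W_t$) and added the obvious observation that distinct leading $q$-exponents force linear independence.
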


We are now ready to prove Theorem \thm{aAtkincong}.
We have two cases:
\subsection*{Case 1}
In the first case we assume that $(t,c)=(5,5)$, $(7,4)$ or $(13,2)$.
Suppose $\ell>3$ is prime and $\ell\ne t$.
By \eqn{Aellid} we have
\beq
\mathcal{A}_{\ell}(z) 
= \sum_{n=1}^{s_\ell} b_{n,\ell} \frac{E_6(z)}{E_4(z)} \,
                      \frac{j(z)^n}{\eta(z)}.
\mylabel{eq:Aellidv2}
\eeq
By Theorem \thm{Othm}, equation \eqn{ASXi} and Lemma \lem{bases} we have
\beq
\mathcal{A}_{\ell}(z) 
= \sum_{n=-ts_\ell}^{s_\ell}  d_{n,\ell} \frac{\Eis{t}{z}}{\eta(z)} \, 
                              G_t(z)^n,
\mylabel{eq:Aellid3}
\eeq
for some integers $d_{n,\ell}$ ($-t s_\ell \le n \le s_\ell$).
Now let
\beq
K(z) = \sum_{n=1}^{s_\ell} d_{n,\ell} G_t(z)^n.
\mylabel{eq:Kdef}
\eeq
By using Lemmas \lem{aLem4} and \lem{Lem4} to reduce equation \eqn{Aellidv2}
modulo $t^c$ and comparing the result with \eqn{Aellid3} we deduce that
\beq
\mathcal{A}_{\ell}(z) \equiv 
\mathcal{E}_{2,t}(z) \, \frac{K(z)}{\eta(z)} \pmod{t^c}
\mylabel{eq:Aellidcong}
\eeq
and that
\beq
d_{n,\ell} \equiv 0 \pmod{t^c},
\eeq
for $-t s_\ell \le n\le0$.
By examining \eqn{Aellid3} we see that
\beq
\mathcal{A}_{\ell}(z)  =              
\mathcal{E}_{2,t}(z) \, \frac{K(z)}{\eta(z)}  + O(q^{-\frac{1}{24}}).
\mylabel{eq:Aellidapprox}
\eeq
So if we let
\beq
\mathcal{E}_{2,t}(z) \, \frac{K(z)}{\eta(z)}
   = \sum_{n=-s_\ell}^\infty \beta_{t,\ell}(n) q^{n-\frac{1}{24}},
\mylabel{eq:betacofs},
\eeq
then \eqn{Aellidcong} may be rewritten as
\beq
\mathbf{a}(\ell^2 n - s_\ell)
+ \chi_{12}(\ell)  \left(\leg{1-24n}{\ell}-1-\ell\right)\mathbf{a}(n)
+  \ell\,\mathbf{a}\left( \frac{n + s_\ell}{\ell^2} \right)
\equiv \beta_{t,\ell}(n) \pmod{t^c}
\mylabel{eq:acongtc}
\eeq
and from \eqn{Aellidapprox} we have
\beq
\mathbf{a}(\ell^2 n - s_\ell)
+ \chi_{12}(\ell)  \left(\leg{1-24n}{\ell}-1-\ell\right)\mathbf{a}(n)
+  \ell\,\mathbf{a}\left( \frac{n + s_\ell}{\ell^2} \right)
=\beta_{t,\ell}(n)
\mylabel{eq:abetaid}
\eeq
for $-s_\ell \le n \le -1$.
Equation \eqn{abetaid} implies that
\begin{align}
\beta_{t,\ell}(-s_\ell) &= - \ell
\mylabel{eq:betares1}\\
\noalign{\mbox{and}}
\beta_{t,\ell}(n) &= 0,
\mylabel{eq:betares2}
\end{align}
for  $-s_\ell \le n \le -1$.
We can now apply Proposition \propo{wt2AtkinLemma3}
with $m=-s_\ell$ since $1-24m=\ell^2$ and $t\ne\ell$.
Hence
\beq
\beta_{t,\ell}(n) = 0,\quad\mbox{provided}\quad \leg{1-24n}{t}=-1.
\mylabel{eq:betares3}
\eeq
This gives Theorem \thm{aAtkincong}
when $(t,c)=(5,5)$, $(7,4)$ or $(13,2)$ by \eqn{acongtc}.

\subsection*{Case 2}
We consider the remaining case $(t,c)=(5,6)$ and assume $\ell > 5$ is
prime.
We proceed as in Case 1. This time when we use Lemma  \lem{aLem4b}
to reduce \eqn{Aellid} modulo $5^6$ we see that the only extra term occurs
when $n=1$. We find that with $K(z)$ as before we have
\beq
\mathcal{A}_{\ell}(z) \equiv 
\mathcal{E}_{2,t}(z) \, \frac{K(z)}{\eta(z)}  
+ b_{1,5} \cdot 2\cdot 5^5 \, 
\frac{\Eis{5}{z}}{\eta(z)} \, G_5(z)^{-1}
\pmod{5^6}.
\mylabel{eq:Aellidcong56}
\eeq
All that remains is to show that
\beq
b_{1,5} \equiv 0 \pmod{5},
\eeq
since then \eqn{Aellidcong} actually holds modulo $5^6$
and the rest of the proof proceeds as in Case 1.
Since $E_4(z) \equiv 1\pmod{5}$ we may reduce \eqn{Xiidcong1} modulo $5$
to obtain
\beq
\ell \, \Xi_\ell 
\equiv 
(\chi_{12}(\ell)\ell(1+\ell) - c_{0,\ell})\,\frac{E_6(z)}{E_4(z)} \, \frac{1}{\eta(z)}
- \sum_{n=1}^{s_\ell} (24n+1) c_{n,\ell} \, \frac{E_6(z)}{E_4(z)} \,
                             \frac{j(z)^n}{\eta(z)}
\pmod{5}.
\mylabel{eq:Ximod5}
\eeq
But
\beq
\mathcal{S}_{\ell}(z) \equiv 0 \pmod{5},
\eeq
by Theorem \thm{mainthm} (ii).
Hence
\beq
\ell \mathcal{A}_{\ell}(z)
\equiv 
(\chi_{12}(\ell)\ell(1+\ell) - c_{0,\ell})\,\frac{E_6(z)}{E_4(z)} \, \frac{1}{\eta(z)}
- \sum_{n=1}^{s_\ell} (24n+1) c_{n,\ell} \, \frac{E_6(z)}{E_4(z)} \,
                             \frac{j(z)^n}{\eta(z)}
\pmod{5}
\eeq
and we see that $b_{1,5}$ the coefficient of $ \frac{E_6(z)}{E_4(z)} \,
                             \frac{j(z)}{\eta(z)}$ is divisible by
$5$ as required. This completes the proof of Theorem \thm{aAtkincong}.

We close by illustrating Theorem \thm{aAtkincong} when $t=5$ and $\ell=7$.
In this case the theorem predicts that
$$
\mathbf{a}(49n-2) - \leg{1-24n}{7}\,\mathbf{a}(n) 
+ 7 \mathbf{a}\left(\frac{n+2}{49}\right)
\equiv - 8\,\mathbf{a}(n) \pmod{5^6},
$$
when $n\equiv 1,2\pmod{5}$. When $n=1$ this says
$$
149077845 \equiv -280 \pmod{5^6},
$$
which is easy to check.


\noindent
\textbf{Acknowledgements}

\noindent
I would like to thank Ken Ono for sending me preprints
of his recent work \cite{On10}, \cite{On10b}.
\bibliographystyle{amsplain}

\begin{thebibliography}{10}
\bibitem{An08b}
G.~E.~Andrews, 
\textit{The number of smallest parts in the partitions of {$n$}},
J. Reine Angew. Math.
\textbf{624} 
(2008),
133--142.
\hfill\\
URL:\,\url{http://dx.doi.org/10.1515/CRELLE.2008.083}
\bibitem{At-Ga}
A.~O.~L.~Atkin and F.~G.~Garvan, 
\textit{Relations between the ranks and cranks of partitions},
Ramanujan J.
\textbf{7} 
(2003),
343--366.
\hfill\\
URL:\,\url{http://dx.doi.org/10.1023/A:1026219901284}
\bibitem{At68}
A.~O.~L.~Atkin,
\textit{Ramanujan congruences for {$p\sb{-k}(n)$}},
Canad. J. Math. 20 (1968), 67-78; corrigendum, ibid.
\textbf{21} 
(1968), 256.
\hfill\\
URL:\,\url{http://cms.math.ca/cjm/v20/p67}
\bibitem{At68b}
A.~O.~L.~Atkin, 
\textit{Multiplicative congruence properties and 
density problems for $p(n)$},
Proc. London Math. Soc. (3)
\textbf{18} 
(1968),
563--576.
\hfill\\
URL:\,\url{http://dx.doi.org/10.1112/plms/s3-18.3.563}
\bibitem{Be-Ev}
B.~C.~Berndt and R.~J.~Evans,
\textit{The determination of Gauss sums},
Bull. Amer. Math. Soc. (N.S.)
\textbf{5} 
(1981),
107--129.
\hfill\\
URL:\,\url{http://dx.doi.org/10.1090/S0273-0979-1981-14930-2}
\bibitem{Br08}
K.~Bringmann, 
\textit{On the explicit construction of higher deformations of
partition statistics},
Duke Math. J.
\textbf{144} 
(2008),
195--233.
\hfill\\
URL:\,\url{http://dx.doi.org/10.1215/00127094-2008-035}
\bibitem{Co-Oe}
H.~Cohen and J.~Oesterl{\'e},
\textit{Dimensions des espaces de formes modulaires},
Springer Lecture Notes, Vol. 627, 1977, 69--78.
\hfill\\
URL:\,\url{http://dx.doi.org/10.1007/BFb0065297}
\bibitem{Fo-On}                  
A.~Folsom and K.~Ono,
\textit{The spt-function of Andrews},
Proc. Natl. Acad. Sci. USA 
\textbf{105}
(2008), 
20152--20156.
\hfill\\
URL:\,\url{http://mathcs.emory.edu/~ono/publications-cv/pdfs/111.pdf}
\bibitem{Ga-PC2007}
K.~C.~Garrett,
Private communication, October 18, 2007.
\bibitem{Ga10a}
F.~G.~Garvan,
\textit{Congruences for Andrews' smallest parts partition 
function and 
new congruences for Dyson's rank},
Int. J. Number Theory
\textbf{6}
(2010), 
1--29. 
\hfill\\
URL:\,\url{http://dx.doi.org/10.1142/S179304211000296X}
\bibitem{Ga10d}
F.~G.~Garvan,
\textit{ Congruences for Andrews' spt-function 
modulo powers of $5$, $7$ and $13$},
in preparation.
\hfill\\
URL:\,\url{http://www.math.ufl.edu/~fgarvan/papers/spt2.pdf}
\bibitem{On10}
K.~Ono,        
\textit{Congruences for the Andrews spt-function},
Proc. Natl. Acad. Sci. USA,
to appear.
\hfill\\
URL:\,\url{http://mathcs.emory.edu/~ono/publications-cv/pdfs/132.pdf}
\bibitem{On10b}
K.~Ono,        
\textit{The partition function and Hecke operators},
preprint.                      
\bibitem{Se}
J.-P.~Serre, 
\textit{Formes modulaires et fonctions z\^eta {$p$}-adiques}
in ``Modular functions of one variable, {III}'',
(Proc. Internat. Summer School, Univ. Antwerp, 1972),
pp. 191--268, Lecture Notes in Math., Vol. 350, Springer, Berlin, 1973.
\hfill\\
URL:\,\url{http://dx.doi.org/10.1007/978-3-540-37802-0_4}
\end{thebibliography}

\end{document}